\documentclass{amsart}

\usepackage{amscd, amssymb, amsmath, amsthm}
\usepackage{mathabx} 
\usepackage{enumerate, latexsym, mathrsfs}
\usepackage{xypic}
\usepackage[
		bookmarks=true, bookmarksopen=true,%
    bookmarksdepth=3,bookmarksopenlevel=2,%
    colorlinks=true,%
    linkcolor=blue,%
    citecolor=blue]{hyperref}
%
%

\newtheorem{theorem}{Theorem}[section]
\newtheorem{lemma}[theorem]{Lemma}

\newtheorem{corollary}[theorem]{Corollary}
\newtheorem{conjecture}[theorem]{Conjecture}

\theoremstyle{definition}

\newtheorem{notation}[theorem]{Notation}

\newtheorem{remark}[theorem]{Remark}

\numberwithin{equation}{section}

\newcommand{\Natural}{{\mathbb N}}
\newcommand{\Real}{{\mathbb R}}
\newcommand{\Rational}{{\mathbb Q}}

\newcommand{\Integral}{{\mathbb Z}}

\newcommand{\ud}{{\mathrm{d}}}

\title[{Euler class one fillable}]{
On the Euler class one conjecture\\ for fillable contact structures}
     
\author[Yi Liu]{%
        Yi Liu} 
\address{%
        Beijing International Center for Mathematical Research, Peking University\\
				Beijing 100871, China P.R.} 
\email{%
    liuyi@bicmr.pku.edu.cn}

\thanks{Partially supported by NSFC Grant 11925101, 
and National Key R\&D Program of China 2020YFA0712800}
\subjclass[2020]{Primary 57K32,57K18; Secondary 57K33}
\keywords{finite cover, pseudo-Anosov flow, contact structure}

\date{%
 \today} 


\begin{document}

\begin{abstract}
	In this paper, it is proved that
	every oriented closed hyperbolic $3$--manifold $N$ 
	admits some finite cover $M$ with the following property.
	There exists some even lattice point $w$ 
	on the boundary of the dual Thurston norm unit ball of $M$,
	such that $w$ is not the real Euler class 
	of any weakly symplectically fillable contact structure on $M$.
	In particular, 
	$w$ is not the real Euler class of any transversely oriented, taut foliation on $M$.
	This supplies 
	new counter-examples to Thurston's Euler class one conjecture.
\end{abstract}

\maketitle

\section{Introduction}
For any orientable compact $3$--manifold $M$, 
the dual Thurston norm unit ball
is a well-defined compact polytope in $H^2(M,\partial M;\Real)$.
It is central symmetric about the origin, and possibly of positive codimension.
The polytope is the convex hull of finitely many vertices in the integral lattice.
When $M$ is closed,
the vertices are all even lattice points 
(that is, elements that are divisible by $2$ in $H^2(M;\Integral)$ modulo torsion).

If $M$ is oriented, and closed, and irreducible,
then the real Euler class of any transversely oriented taut foliation on $M$
must be an even lattice point in the dual Thurston norm unit ball.
Thurston established this result,
and posed the following partial converse.

\begin{conjecture}[Euler Class One]\label{ecoc_taut}
For any oriented, closed, irreducible $3$--manifold $M$,
every even lattice point in $H^2(M;\Real)$ of dual Thurston norm $1$
is the real Euler class of some transversely oriented taut foliation on $M$.
\end{conjecture}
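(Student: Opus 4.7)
The plan is to realize, for each even lattice point $w \in H^2(M;\Real)$ of dual Thurston norm $1$, a transversely oriented taut foliation $\mathcal{F}$ on $M$ with real Euler class $e(\mathcal{F})=w$. The chief constraint I would exploit in reverse is Thurston's inequality: for any such $\mathcal{F}$, the class $e(\mathcal{F})$ lies in the dual unit ball, with equality $|\langle e(\mathcal{F}),[S]\rangle|=\|S\|_T$ on the class of any compact leaf $S$.

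First I would reduce to realizing vertices of the dual unit ball. The point $w$ lies in the relative interior of some face $F^\ast$, which by Thurston's duality corresponds to a top-dimensional face $F$ of the Thurston norm unit ball. Any norm-minimizing surface $S$ with $[S]/\|S\|_T$ in the interior of $F$ satisfies $\langle w,[S]\rangle=\|S\|_T$, so $w$ is determined on the subspace spanned by $F$ by these pairings; in particular, realizing every vertex of the dual ball suffices, with non-vertex points reached by combining or deforming foliations realizing the adjacent vertices.

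Second, for a vertex $w$ and its dual top face $F$, I would invoke Gabai's sutured manifold hierarchy to produce a transversely oriented, taut, finite-depth foliation $\mathcal{F}$ containing a norm-minimizing surface $S$ (with $[S]$ in the interior of the cone over $F$) as a leaf. Thurston's theorem then forces $e(\mathcal{F}) \in F^\ast$, and varying $S$ within $F$ shows that the pairings of $e(\mathcal{F})$ against a basis of the span of $F$ agree with those of $w$.

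The main obstacle is the gap between ``$e(\mathcal{F}) \in F^\ast$ with correct pairings against the span of $F$'' and the exact integral equality $e(\mathcal{F})=w$ in $H^2(M;\Integral)$. The difference $e(\mathcal{F})-w$ is an integral cohomology class annihilated by the span of $F$, encoding twisting of the tangent plane field along cycles transverse to $F$, and Gabai's construction offers no direct handle on this shift. Closing the gap would require either a refinement of the hierarchy that tracks the Euler cocycle cellularly, or a branched-surface argument controlling rotation numbers along transverse cycles. I expect this to be the most delicate step, and for certain manifolds it may turn out to be genuinely obstructed.
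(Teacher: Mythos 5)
This statement is not a theorem of the paper; it is Thurston's Euler Class One Conjecture, and the paper's main result (Theorem~\ref{main_ecoc_fillable}, with Corollary~\ref{ecoc_taut_corollary}) is precisely that this conjecture is \emph{false} for suitable finite covers of every oriented closed hyperbolic $3$--manifold. Earlier counter-examples are due to Yazdi \cite{Yazdi_ecoc}. So no argument of the kind you sketch can succeed in general; the task at hand is not to prove the statement but to disprove it.

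Beyond that global issue, the specific reduction you propose in the first step is where the conjecture already breaks down. You assert that once the vertices of $\mathcal{B}_{\mathrm{Th}^*}(M)$ are realized, ``non-vertex points [can be] reached by combining or deforming foliations realizing the adjacent vertices.'' There is no such combination or interpolation procedure for taut foliations: the Euler class is a rigid integral invariant, and there is no operation on foliations that interpolates Euler classes along a face of the dual ball. Gabai's sutured-manifold construction does realize every \emph{vertex} (this is exactly what the paper cites from \cite{Gabai--Yazdi}), but the passage from vertices to arbitrary even lattice points on $\partial\mathcal{B}_{\mathrm{Th}^*}(M)$ is the entire content of the conjecture, and it is this step that fails. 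Your second step and your closing paragraph correctly identify the remaining obstruction --- the indeterminacy of $e(\mathcal{F})$ transverse to the span of the fibered face --- but this is not a technical hurdle to be surmounted: the paper shows (via Floer-theoretic obstructions to weak symplectic fillability, applied to pseudo-Anosov mapping tori with carefully engineered periodic-orbit homology in finite covers) that for certain even lattice points $\tilde w$ of dual norm $1$, no taut foliation, and indeed no weakly symplectically fillable contact structure, has real Euler class $\tilde w$.
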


The special case of Conjecture \ref{ecoc_taut}
regarding vertices of the dual Thurston norm unit ball holds true.
In fact, Gabai proves that every vertex can be realized 
with some transversely oriented taut foliation 
(with smooth leaves and continuous holonomy),
using sutured manifold decompositions;
see \cite[Section 3]{Gabai--Yazdi} for detail.

The general case of Conjecture \ref{ecoc_taut} turns out to be false.
In fact, Yazdi constructs explicit counter-examples 
among oriented closed hyperbolic $3$--manifolds 
of first Betti number $2$ \cite{Yazdi_ecoc}.

In this paper, we construct new counter-examples to Conjecture \ref{ecoc_taut}
by using finite covers of an arbitrary oriented closed hyperbolic $3$--manifold.
Our construction shows that Conjecture \ref{ecoc_taut} fails quite often.
Our main result is as follows.

\begin{theorem}\label{main_ecoc_fillable}
For every oriented closed hyperbolic $3$--manifold $M$,
there exists some connected finite cover $\tilde{M}$ of $M$,
and some even lattice point $\tilde{w}\in H^2(\tilde{M};\Real)$ of dual Thurston norm $1$,
such that $\tilde{w}$ is not the real Euler class
of any weakly symplectically fillable contact structure on $\tilde{M}$.
\end{theorem}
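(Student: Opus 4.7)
The plan is to combine Agol's virtual fibering theorem with obstructions from Heegaard Floer theory for weakly fillable contact structures, refining Yazdi's approach so that it applies to finite covers of an arbitrary oriented closed hyperbolic $3$--manifold. First, I would pass to a finite cover $M'\to M$ that fibers over the circle, pick a fibered face $F$ of the dual Thurston norm unit ball of $M'$, and invoke Fried's theorem to obtain a pseudo-Anosov suspension flow $\phi_t$ whose cross sections realize every rational ray through the cone on $F$.

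Next, I would identify the candidate $\tilde{w}$. Among the even lattice points on $\partial F$ of dual Thurston norm one, only finitely many arise as ``canonical'' Euler classes associated with $\phi_t$ and its invariant foliations; these are the ones realized by taut foliations transverse or tangent to $\phi_t$, hence by weakly fillable contact structures via Eliashberg--Thurston. The count of canonical classes is controlled by combinatorial data of the pseudo-Anosov monodromy, while the total number of even lattice points on $\partial F$ grows under further finite covers. Passing to a sufficiently deep cover $\tilde{M}\to M'$, one locates an even lattice point $\tilde{w}\in H^2(\tilde{M};\Real)$ on the corresponding fibered face that is distinct from every canonical class. The obstruction step is then: if $\tilde{w}=e(\xi)$ for some weakly fillable $\xi$, the non-vanishing of the Ozsv\'ath--Szab\'o contact invariant not only enforces the dual Thurston norm bound but imposes boundary rigidity, allowing one to use $\phi_t$ to classify such $\xi$ up to homotopy of $2$-plane fields and conclude that $e(\xi)$ must be canonical, contradicting the choice of $\tilde{w}$.

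The hardest part will be the boundary rigidity in the obstruction step: establishing that a weakly fillable contact structure whose Euler class lies on a fibered face of the dual Thurston norm unit ball is forced to have canonical Euler class with respect to the associated pseudo-Anosov flow. This refinement of the Heegaard Floer norm bound, and its synthesis with the pseudo-Anosov flow structure, is the technical heart of the argument. Once it is in place, the existence of a suitable $\tilde{w}$ in some finite cover reduces to a parity/counting argument that can be made favorable by taking sufficiently deep covers.
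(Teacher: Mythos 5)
There is a genuine gap: the proposal's entire weight rests on an asserted ``boundary rigidity'' — that any weakly symplectically fillable contact structure whose Euler class lies on a fibered face must have one of finitely many ``canonical'' Euler classes attached to the pseudo-Anosov flow — and no mechanism for this is given. No such classification result exists; in fact, the substance of Yazdi's work and of this paper is precisely that the set of realizable lattice points on $\partial\mathcal{B}_{\mathrm{Th}^*}$ is not understood a priori, and distinguishing realizable from non-realizable classes is where all the work lies. Merely outcounting a hypothetical finite list of ``canonical'' classes by passing to deeper covers cannot work without first establishing that list, and the nonvanishing of the Ozsv\'ath--Szab\'o contact invariant does not ``impose boundary rigidity'' in the sense you describe: the untwisted invariant can even vanish for fillable structures (Ghiggini), and the rank bound from \cite{OS_genus} constrains which $\mathrm{Spin}^{\mathtt{c}}$ summand is nonzero, not a discrete set of Euler classes.

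What the paper actually does in the obstruction step is far more specific. It works at the next-to-top $\mathrm{Spin}^{\mathtt{c}}$ level, $\langle c_1(\mathfrak{s}_\xi),[S]\rangle=-\chi(S)-2$, where by work of Kutluhan--Lee--Taubes and Lee--Taubes the relevant summand of Heegaard Floer homology is identified with periodic Floer homology of the pseudo-Anosov monodromy, which Cotton-Clay computed: it is zero in a given $H_1$-class precisely when there is no $1$-periodic trajectory of the suspension flow in that class. The paper then shows (the real novelty) that one can choose a finite cover, using Fried's cone of homological directions, quasiconvex subgroups attached to a face of that cone, and Agol--Wise virtual retractions, so that a generic rational direction on some face corner is vacuum of periodic trajectories; a final cyclic-cover trick promotes such a rational direction to a genuine even lattice point. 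These dynamical and group-theoretic ingredients — Fried cone, quasiconvex subgroups, virtual retracts, the cyclic-cover promotion — do not appear in your proposal, and without them there is no route from ``fillable'' to ``canonical.''
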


\begin{corollary}\label{ecoc_taut_corollary}
Every oriented closed hyperbolic $3$--manifold has a finite cover 
for which Conjecture \ref{ecoc_taut} does not hold.
\end{corollary}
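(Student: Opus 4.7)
The plan is to deduce Corollary \ref{ecoc_taut_corollary} directly from Theorem \ref{main_ecoc_fillable} by invoking the standard principle that every transversely oriented taut foliation on a closed oriented $3$--manifold can be approximated by a weakly symplectically fillable contact structure with the same real Euler class. So the approach has three steps: apply the theorem, verify that the resulting cover falls within the scope of Conjecture \ref{ecoc_taut}, and then pass from the taut foliation hypothesis in the conjecture to the weak fillability hypothesis in the theorem.

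First I would invoke Theorem \ref{main_ecoc_fillable} to produce a connected finite cover $\tilde{M}$ of the given oriented closed hyperbolic $3$--manifold $M$, together with an even lattice point $\tilde{w}\in H^2(\tilde{M};\Real)$ of dual Thurston norm $1$ that is not the real Euler class of any weakly symplectically fillable contact structure on $\tilde{M}$. Since a finite cover of a closed hyperbolic $3$--manifold is again closed hyperbolic, $\tilde{M}$ is in particular oriented, closed, and irreducible, so Conjecture \ref{ecoc_taut} is meaningful for $\tilde{M}$, and the class $\tilde{w}$ is precisely one of the even lattice points of dual Thurston norm $1$ to which the conjecture would apply.

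Next I would argue by contradiction. Suppose the conjecture held for $\tilde{M}$; then $\tilde{w}$ would be realized as the real Euler class of some transversely oriented taut foliation $\mathcal{F}$ on $\tilde{M}$. By the perturbation theorem of Eliashberg and Thurston (extended to the $C^0$ setting by Kazez--Roberts and Bowden), $\mathcal{F}$ admits $C^0$-approximations by positive contact structures $\xi$ with the same plane field Euler class, and these approximating contact structures are weakly symplectically fillable (here one uses that $\mathcal{F}$ is taut, so the approximating contact structures are semi-fillable and hence weakly fillable in the sense used in the theorem). Thus $\tilde{w}=e(\xi)\in H^2(\tilde{M};\Real)$ would be realized by a weakly symplectically fillable contact structure, contradicting Theorem \ref{main_ecoc_fillable}.

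Since Theorem \ref{main_ecoc_fillable} is the main substantive input, the only real step here is to make sure the Eliashberg--Thurston-type approximation is applied in a form that simultaneously preserves the Euler class and guarantees weak fillability; this is the mild point I would take care to verify against the literature, but no new argument is needed. Assuming the theorem, the corollary follows immediately.
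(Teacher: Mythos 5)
Your argument is correct and follows essentially the same route as the paper: invoke Theorem~\ref{main_ecoc_fillable} to produce $(\tilde{M},\tilde{w})$, note that $\tilde{M}$ is still oriented, closed, and irreducible, and then rule out any taut foliation realizing $\tilde{w}$ by perturbing it (via Eliashberg--Thurston and Etnyre, as recalled in Section~\ref{Subsec-contact}) into a weakly symplectically fillable contact structure with the same real Euler class. Your mention of the Kazez--Roberts/Bowden $C^0$ extensions is a harmless extra, since the paper's convention for taut foliations already assumes smooth leaves and continuous holonomy.
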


Corollary \ref{ecoc_taut_corollary} is an immediate consequence of Theorem \ref{main_ecoc_fillable}.
In fact, transversely oriented taut foliations 
can always be perturbed (as plane distributions)
into weakly symplectically fillable contact structures,
by well-known theorems 
due to Eliashberg and Thurston, and due to Etnyre (see Section \ref{Subsec-contact}).

Theorem \ref{main_ecoc_fillable} and Corollary \ref{ecoc_taut_corollary}
supply new counter-examples to Conjecture \ref{ecoc_taut}.
Indeed, as shown by Sivek and Yazdi,
the counter-example cohomology classes in Yazdi's counter-example manifolds 
can be realized by 
weakly symplectically fillable contact structures \cite[Theorem 1.1]{Sivek--Yazdi}.

By iterating the construction in Theorem \ref{main_ecoc_fillable},
one can easily produce new non-realizable cohomology classes in higher finite covers.
However,
I do not know if an obtained non-realizable cohomology class
would turn into a realizable one,
after pulling back to some higher finite cover.
Non-realizable cohomology classes
virtually keep emerging, and they might virtually evaporate.

See Section \ref{Sec-discussion} for more technical discussion
related to other unsolved variants of the Euler class one conjecture.

\subsection*{Ingredients}
In the rest of the introduction,
we explain the main idea to prove Theorem \ref{main_ecoc_fillable}.

Key to our construction is a deep connection between
the next-to-top term in Heegaard Floer homology for a pseudo-Anosov mapping torus
and the $1$--periodic trajectories of the pseudo-Anosov suspension flow.
This connection has been established by works of
Cotton-Clay \cite{Cotton-Clay_sfh},
and Kutluhan--Lee--Taubes \cite{KLT_hf_hm_i,KLT_hf_hm_ii,KLT_hf_hm_iii,KLT_hf_hm_iv,KLT_hf_hm_v},
and Lee--Taubes \cite{LT_hp_swf}.

On the Heegaard Floer homology side,
Ozsv\'ath and Szab\'o introduce an obstruction
to the existence of a weak symplectic filling $(W,\omega)$
of a connected, closed, transversely oriented contact $3$--manifold $(Y,\xi)$ \cite[Section 4]{OS_genus}.
The obstruction class lives in the hat-flavor Heegaard Floer homology of 
the orientation-reversal $-Y$ with twisted coefficients associated to $\omega$
(see Remark \ref{fillable_implies_nonvanishing_remark} for further information).
It is supported in the direct summand 
at the canonical $\mathrm{Spin}^{\mathtt{c}}$ structure $\mathfrak{s}_\xi$ of $\xi$.

Based on a nonvanishing criterion due to Ozsv\'ath and Szab\'o \cite[Theorem 4.2]{OS_genus},
we duduce a useful simplified version.
Our simplified version says that weak symplectic fillability of $(Y,\xi)$
implies nonvanishing of $\widehat{\mathrm{HF}}(-Y,\mathfrak{s}_\xi)$,
(even if the contact invariant vanishes therein).
See Lemma \ref{fillable_implies_nonvanishing} and Remark \ref{fillable_implies_nonvanishing_remark}.
When $Y$ is the mapping torus $M_f$ of some pseudo-Anosov automorphism $f\colon S\to S$,
and when $\langle c_1(\mathfrak{s}_\xi),[S]\rangle= -\chi(S)-2$,
the nonvanishing of $\widehat{\mathrm{HF}}(-Y,\mathfrak{s}_\xi)$
can be translated into the existence of $1$--periodic trajectories
in some particular first homology class (depending on $\mathfrak{s}_\xi$).
The translation follows from the aforementioned connection.
See Lemma \ref{no_trajectory_implies_vanishing} for the precise statement.
We use these results to establish a preparatory criterion (Lemma \ref{no_fillable}),
so as to facilitate direct application.

On the pseudo-Anosov dynamics side,
our basic tool is the Fried cone of homological directions \cite[{Expos\'e 14}]{FLP_book}.
For any pseudo-Ansov automorphism $f\colon S\to S$,
every periodic trajectory of the pseudo-Anosov suspension flow in the mapping torus $M_f$
determines a ray in $H_1(M_f;\Real)$,
emanating from the origin and passing through the homology class of the periodic trajectory.
The closure of all such rays in $H_1(M_f;\Real)$
forms what is called the Fried cone of homological directions.

The Fried cone is a polyhedral cone.
To every boundary face of the Fried cone,
we can associate 
a finite collection of infinite-index quasiconvex subgroups of $\pi_1(M_f)$.
These subgroups capture all the periodic trajectories
whose homology direction on that face, in a suitable sense
(see Lemma \ref{facial_clusters} for a precise statement).
Base on the virtual compact specialization of closed hyperbolic $3$--manifold groups
due to Agol \cite{Agol_VHC} and Wise \cite{Wise_book},
we are able to make use of the above subgroups,
and construct some finite cover of $M_f$, 
such that the Fried cone of the finite cover 
has a boundary face on which a generic rational homology direction
is vacuum of periodic trajectories.
Precisely, we show that
the face has a scaling-invariant open dense subset,
such that no periodic trajectory represents any rational homology class therein
(see Lemma \ref{virtual_vacuum_directions} for detail).
Virtual constructions with similar ideas to this part
have appeared in former works of the author \cite{Liu_vhsr,Liu_profinite_almost_rigidity}.

With these ingredients, our proof of Theorem \ref{main_ecoc_fillable}
can be outlined as follows.

Given any oriented closed hyperbolic $3$--manifold $M$,
we first obtain a fibered finite cover $M'$, 
by the virtual fibering theorem \cite{Agol_VHC}.
Next, we construct a finite cover $M''$ of $M'$ using Lemma \ref{virtual_vacuum_directions},
such that $M''$ has some Fried cone face with generically vacuum rational homology directions,
as described above.
Finally, we construct a finite cyclic cover $\tilde{M}$ of $M''$.
Passing from $M''$ to $\tilde{M}$ will enable us to promote
a vacuum rational homology direction in $H^2(M'';\Real)\cong H_1(M'';\Real)$
to a genuine candidate cohomology class
$\tilde{w}$ in $H^2(\tilde{M};\Real)\cong H_1(\tilde{M};\Real)$.
The last step relies on a cyclic cover trick as we explain in Lemma \ref{iterate}.
Using our preparatory criterion (Lemma \ref{no_fillable}),
we verify that $(\tilde{M},\tilde{w})$ is as asserted in Theorem \ref{main_ecoc_fillable}.
See Section \ref{Sec-main_proof} for detail.

\subsection*{Organization}
In Section \ref{Sec-preliminary}, 
we recall background facts about Conjecture \ref{ecoc_taut} and Theorem \ref{main_ecoc_fillable}.
In Section \ref{Sec-vanishing_criterion}, 
we establish our preparatory criterion 
for non-existence of fillable contact structures (Lemma \ref{no_fillable}).
In Section \ref{Sec-rare}, 
we establish our major virtual construction \ref{virtual_vacuum_directions}.
In Section \ref{Sec-main_proof},
after establishing a cyclic cover trick (Lemma \ref{iterate}),
we prove Theorem \ref{main_ecoc_fillable}.
In Section \ref{Sec-discussion},
we discuss the relation of 
other analogous Euler class one conjectures with our construction.

\subsection*{Acknowledgement}
The author thanks Yi Ni for valuable communication.
The author thanks Yaoping Xie for helpful comments.

\section{Preliminary}\label{Sec-preliminary}
In this section, we collect preliminary facts 
about the Thurston norm and contact structures.

\subsection{The dual Thurston norm unit ball}\label{Subsec-dual_ball}
We review the Thurston norm,
focusing on properties of the dual Thurston norm unit ball.
See \cite{Thurston_paper_norm} for the reference.

For any finite-dimensional real vector space $V$,
a \emph{seminorm} on $V$ refers to a function $\|\cdot\|\colon V\to [0,+\infty)$
with the properties $\|rv\|=|r|\cdot\|v\|$ and $\|v+w\|\leq\|v\|+\|w\|$,
for all $r\in\Real$ and for all $v,w\in V$.
A \emph{norm} is a seminorm that is nondegenerate, 
namely, $\|v\|>0$ for all nonzero $v\in V$.
Every seminorm $\|\cdot\|$ on $V$ 
determines a dual \emph{partial norm} on the dual vector space
$V^*=\mathrm{Hom}_\Real(V,\Real)$ by the expression
$\|u^*\|^*=\mathrm{sup}\{|\langle u^*, v\rangle|\colon \|v\|=1\}$,
which takes values in $[0,+\infty]$, 
and the subset with finite values forms the linear subspace $W^*$ of $V^*$,
which is dual to the zero-norm subspace $W$ of $\|\cdot\|$, namely,
$W=\{v\in V\colon \|v\|=0\}$ and $W^*=\{u\in V^*\colon u^*|_{W}=0\}$.
For any seminorm $\|\cdot\|$ on $V$, the dual partial norm $\|\cdot\|^*$ on $V^*$
is a norm restricted to the maximal finite subspace $W^*$.
Therefore, the unit ball of $\|\cdot\|$ in $V$ is 
a $0$--codimensional closed subset,
which is symmetric about the origin and 
is invariant under translations parallel to the zero-norm subspace;
the unit ball of $\|\cdot\|^*$ in $V^*$ is 
a bounded closed subset,
which is symmetric about the origin,
and is contained in the finite-norm subspace with codimension $0$.

For any orientable connected compact $3$-manifold $M$,
the \emph{Thurston norm} for $M$ is a seminorm on the real linear space $H_2(M,\partial M;\Real)$,
which we recall as follows.
First, 
for any oriented connected compact surface $\Sigma$, define the \emph{complexity} of $S$
to be the nonnegative integer $\mathrm{max}\{-\chi(S),0\}$.
Then, the complexity of an oriented compacted surface $S$
refers to the sum of the complexity of the connected components of $S$.
For any integral relative $2$--homology class $\Sigma\in H_2(M,\partial M;\Integral)$,
assign $\|\Sigma\|_{\mathrm{Th}}$ to be the minimum complexity of $S$,
among all properly embedded, oriented, compact subsurfaces $(S,\partial S)\subset (M,\partial M)$
which represents $\Sigma$.
Thurston shows that $\|\cdot\|_{\mathrm{Th}}$ on $H_2(M,\partial M;\Integral)$
(exists and) extends uniquely to be a seminorm,
denoted as
$$\|\cdot\|_{\mathrm{Th}}\colon H_2(M,\partial M;\Real)\to [0,+\infty).$$

By generality, we also obtain a dual partial norm, 
which we refer to $\|\cdot\|_{\mathrm{Th}^*}$ as the \emph{dual Thurston norm} of $M$,
and denote as
$$\|\cdot\|_{\mathrm{Th}^*}\colon H^2(M,\partial M;\Real)\to [0,+\infty].$$
We denote the unit balls of the Thurston norm and the dual Thurston norm as 
$$\mathcal{B}_{\mathrm{Th}}(M)\subset H_2(M,\partial M;\Real),$$ 
and 
$$\mathcal{B}_{\mathrm{Th}^*}(M)\subset H^2(M,\partial M;\Real),$$
respectively.

Thurston shows that the dual Thurston norm unit ball $\mathcal{B}_{\mathrm{Th}^*}(M)$
is a polytope, which is the convex hull in $H^2(M,\partial M;\Real)$
of finitely many points in the integral lattice,
(that is, $H^2(M,\partial;\Integral)$ modulo torsion).
In dual terms,
this is to say that the Thurston norm unit ball $\mathcal{B}_{\mathrm{Th}}(M)$
is the intersection of finitely many half-spaces in $H_2(M,\partial M;\Real)$
defined by inequalities of the form $l_i(\Sigma) \leq 1$, for $i=1,\cdots,k$,
where $l_i$ are linear functionals on $H_2(M,\partial M;\Real)$
over $\Integral$ with respect to the integral lattice.
When $M$ is closed, 
the vertices of $\mathcal{B}_{\mathrm{Th}^*}(M)$ are all even lattice points,
reflecting the fact
the oriented closed surfaces all have even Euler characteristics.

When $M$ fibers over a circle as a surface bundle,
the bundle projection $\pi\colon M\to S^1$ 
represents a primitive integral cohomology class $\phi_\pi\in H^1(M;\Integral)$,
upon fixing an orientation of $S^1$. 
The Poincar\'e dual is a relative homology class 
$\mathrm{PD}(\phi_\pi)\in H_2(M,\partial M;\Integral)$,
upon fixing an orientation of $M$.
In this case,
Thurston shows that $\phi_\pi$ lies in the open cone 
over a unique codimension--$1$ open face of $\partial \mathcal{B}_{\mathrm{Th}}(M)$,
and every primitive integral cohomology class $\psi\in H^1(M;\Integral)$ 
with $\mathrm{PD}(\psi)$ in that cone represents a fibering of $M$ over a circle.
This open face of $\mathcal{B}_{\mathrm{Th}^*}(M)$ is called the \emph{fibered face}
determined by $\phi_\pi$.
The dual object in $\partial \mathcal{B}_{\mathrm{Th}^*}(M)$ is a vertex.
In fact, it is exactly the (relative) real Euler class of 
the oriented plane distribution over $M$,
as specified by the tangent spaces of the surface fibers
(relative to the obvious trivialization on the boundary).

When $M$ is an orientable closed hyperbolic $3$--manifold,
the Thurston norm on $H_2(M;\Real)$ is nondegenerate.
In this case, both $\mathcal{B}_{\mathrm{Th}^*}(M)\subset H^2(M;\Real)$ 
and $\mathcal{B}_{\mathrm{Th}}(M)\subset H_2(M;\Real)$ 
are $0$--codimensional compact polytopes.

\begin{lemma}\label{dual_ball_cover}
Let $M$ be an oriented compact $3$--manifold.
Suppose that $\kappa\colon M'\to M$ is a finite covering projection.
Denote by $\kappa^*\colon H^2(M;\Real)\to H^2(M';\Real)$ the pullback homomorphism,
and by $\kappa_!\colon H^2(M';\Real)\to H^2(M;\Real)$ the umkehr homomorphism 
$u\mapsto \mathrm{PD}(\kappa_*(\mathrm{PD}(u)))$.
\begin{enumerate}
\item
For every closed face $F$ of $\mathcal{B}_{\mathrm{Th}^*}(M)$,
there is a unique closed face $F'$ of $\mathcal{B}_{\mathrm{Th}^*}(M')$,
such that $F'$ is equal to the intersection of
$\mathcal{B}_{\mathrm{Th}^*}(M')$ with the preimage $\kappa_!^{-1}([M':M]\cdot F)$.
Here, $[M':M]\cdot F$ denotes the rescaled face 
of the rescaled polytope $[M':M]\cdot \mathcal{B}_{\mathrm{Th}^*}(M)$.
\item 
The image $\kappa^*(F)$ in $\mathcal{B}_{\mathrm{Th}^*}(M')$ is contained in $F'$,
and $\kappa^*(\partial F)$ is contained in $\partial F'$.
Hence, $F'$ is 
the minimal closed face of $\mathcal{B}_{\mathrm{Th}^*}(M')$ to contain $\kappa^*(F)$.
\end{enumerate}
\end{lemma}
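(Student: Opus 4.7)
The plan is to reduce everything to properties of the pullback and umkehr maps and to invoke Gabai's multiplicativity of the Thurston norm under finite covers. Let $\tau\colon H_2(M,\partial M;\Real)\to H_2(M',\partial M';\Real)$ denote the transfer map Poincar\'e dual to $\kappa^*$, so that Gabai's theorem reads $\|\tau\alpha\|_{M'}=[M':M]\cdot\|\alpha\|_M$ for every $\alpha$, and the compositions $\kappa_*\tau=[M':M]\cdot\mathrm{id}$ and $\kappa_!\kappa^*=[M':M]\cdot\mathrm{id}$ hold. Two adjunctions play a central role: the ordinary one $\langle\kappa^*u,\beta\rangle=\langle u,\kappa_*\beta\rangle$, and the projection formula $\langle\kappa_!u',\alpha\rangle=\langle u',\tau\alpha\rangle$. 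Computing the dual Thurston norm as a supremum over unit-ball elements and applying these adjunctions, I would first establish two key facts: $\|\kappa^*u\|^*_{M'}=\|u\|^*_M$, so $\kappa^*$ preserves $\mathcal{B}_{\mathrm{Th}^*}$; and $\|\kappa_!u'\|^*_M\leq[M':M]\cdot\|u'\|^*_{M'}$, so $\kappa_!$ maps $\mathcal{B}_{\mathrm{Th}^*}(M')$ into $[M':M]\cdot\mathcal{B}_{\mathrm{Th}^*}(M)$.

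Next, for the closed face $F$, I would pick an element $\ell_0\in H_2(M,\partial M;\Real)$ in the relative interior of the dual face $G\subset\partial\mathcal{B}_{\mathrm{Th}}(M)$, so that $F=\{v\in\mathcal{B}_{\mathrm{Th}^*}(M):\langle v,\ell_0\rangle=1\}$ with $\|\ell_0\|_M=1$. Since $u'\in\mathcal{B}_{\mathrm{Th}^*}(M')$ automatically forces $\kappa_!u'\in[M':M]\cdot\mathcal{B}_{\mathrm{Th}^*}(M)$ by the above, the defining condition $u'\in\mathcal{B}_{\mathrm{Th}^*}(M')\cap\kappa_!^{-1}([M':M]\cdot F)$ reduces via the projection formula to the single linear equation $\langle u',\tau\ell_0\rangle=[M':M]=\|\tau\ell_0\|_{M'}$. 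This cuts $\mathcal{B}_{\mathrm{Th}^*}(M')$ along a supporting hyperplane, producing a closed face which I define to be $F'$. Independence of the choice of $\ell_0\in\mathrm{relint}(G)$ is automatic: any solution $u'$ forces $\kappa_!u'/[M':M]\in F$, which then pairs to $1$ with every element of $G$, so $\langle u',\tau\ell_1\rangle=[M':M]$ for every other $\ell_1\in\mathrm{relint}(G)$. This establishes part (1).

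For part (2), the inclusion $\kappa^*(F)\subseteq F'$ is immediate from $\|\kappa^*v\|^*_{M'}=\|v\|^*_M\leq 1$ together with $\langle\kappa^*v,\tau\ell_0\rangle=\langle v,\kappa_*\tau\ell_0\rangle=[M':M]\langle v,\ell_0\rangle=[M':M]$. The more delicate claim $\kappa^*(\partial F)\subseteq\partial F'$ will be the main obstacle. Given $v$ in a proper sub-face $F^-\subsetneq F$ with enlarged dual face $G^+\supsetneq G$ and auxiliary point $\ell^+\in\mathrm{relint}(G^+)$, applying the construction of part (1) with $F^-$ in place of $F$ yields a closed face $(F^-)'$ of $\mathcal{B}_{\mathrm{Th}^*}(M')$ containing $\kappa^*v$, and a direct comparison shows $(F^-)'\subseteq F'$. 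To see the containment is strict, I would argue by contradiction: equality would give $\kappa_!(F')/[M':M]\subseteq F^-$, which combined with $\kappa^*(F)\subseteq F'$ and the key relation $\kappa_!\circ\kappa^*=[M':M]\cdot\mathrm{id}$ forces $F=\kappa_!\kappa^*(F)/[M':M]\subseteq F^-$, contradicting $F^-\subsetneq F$. Hence $(F^-)'$ is a proper sub-face of $F'$, placing $\kappa^*v\in(F^-)'\subseteq\partial F'$.
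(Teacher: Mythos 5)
Your treatment of part (1) and the first two inclusions of part (2) is correct and fills in, in a concrete way, what the paper leaves as ``the geometric interpretation of the dual relations.'' Picking $\ell_0$ in the relative interior of the primal face $G$ dual to $F$, using the projection formula $\langle\kappa_!u',\alpha\rangle=\langle u',\tau\alpha\rangle$, and recognizing $F'$ as the face cut out by the supporting hyperplane $\langle u',\tau\ell_0\rangle=[M':M]=\|\tau\ell_0\|_{\mathrm{Th}}$ is a clean route, and your argument that $(F^-)'\subsetneq F'$ (via $\kappa_!\circ\kappa^*=[M':M]\cdot\mathrm{id}$) correctly yields $\kappa^*(\partial F)\subseteq\partial F'$. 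It is also fine that you only establish $\|\kappa_!u'\|_{\mathrm{Th}^*}\leq[M':M]\|u'\|_{\mathrm{Th}^*}$ rather than the equality the paper cites; the one-sided bound suffices.

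However, you do not address the final claim, that $F'$ is the \emph{minimal} closed face of $\mathcal{B}_{\mathrm{Th}^*}(M')$ containing $\kappa^*(F)$, and this does not follow from what you have established. The two facts $\kappa^*(F)\subseteq F'$ and $\kappa^*(\partial F)\subseteq\partial F'$ do not by themselves force $\kappa^*$ to carry a relative interior point of $F$ to a relative interior point of $F'$: consider the injective linear map $[0,1]\to[0,1]^2$, $x\mapsto(x,0)$, which sends boundary to boundary yet maps the entire segment, interior included, into the boundary of the square. So the possibility that $\kappa^*(F)$ sits entirely inside a proper sub-face of $F'$ is not excluded by your argument. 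Your $\kappa_!$-based contradiction trick does not resolve this either: if $F''\subsetneq F'$ were a face containing $\kappa^*(F)$, the only conclusion one extracts is $\kappa_!(F'')=[M':M]\cdot F$, which is no contradiction since $\kappa_!$ need not be injective. The paper handles minimality with a separate and genuinely different device: reducing to the case where $M'$ is regular over $M$, it identifies $\kappa^*\bigl(H^2(M,\partial M;\Real)\bigr)$ with the deck-invariant subspace of $H^2(M',\partial M';\Real)$, observes that the barycenter of the vertices of $F'$ is deck-invariant and hence lies in $\kappa^*\bigl(H^2(M,\partial M;\Real)\bigr)$, and checks that this barycenter is a relative interior point of $F'$ coming from a point of $F$. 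You would need to supply an argument of this kind (or an alternative) to close the gap.
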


\begin{proof}
The Thurston norm satisfies the relations
$\|\kappa_*(\Sigma')\|_{\mathrm{Th}}\leq \|\Sigma'\|_{\mathrm{Th}}$, for all $\Sigma'\in H_2(M',\partial M';\Real)$,
and 
$\|\kappa^!(\Sigma)\|_{\mathrm{Th}}=[M':M]\cdot \|\Sigma\|_{\mathrm{Th}}$, for all $\Sigma\in H_2(M,\partial M;\Real)$,
due to deep theorems of Gabai \cite[Corollaries 6.13 and 6.18]{Gabai_taut}.
They imply the dual relations
$\|\kappa^*(u)\|_{\mathrm{Th}^*}\leq \|u\|_{\mathrm{Th}^*}$, for all $u\in H^2(M,\partial M;\Real)$,
and 
$\|\kappa_!(u')\|_{\mathrm{Th}^*}=[M':M]\cdot \|u'\|_{\mathrm{Th}}$, for all $u'\in H^2(M',\partial M';\Real)$.
Note also $\kappa_!\circ\kappa^*=[M':M]\cdot\mathrm{id}$ on $H^2(M,\partial M;\Real)$.
The main assertions in Lemma \ref{dual_ball_cover} follow immediately as
the geometric interpretation of the dual relations.

The last implication, regarding minimality of $F'$, is equivalent to
saying that $\kappa^*(F)$ contains some interior point of $F'$.
This is the most obvious if $M'$ is regular over $M$.
In that case, it is well-known that 
$\kappa^*(H^2(M,\partial M;\Real))$
consists exactly of the elements of $H^2(M',\partial M';\Real)$
that are fixed under the induced action of the deck transformation group;
see \cite[Proposition 3G.1]{Hatcher_AT} 
(applying with the cohomological long exact sequence for relative pairs).
The barycenter of the vertices of $F'$
is fixed under the deck transformation group,
so it is an interior point of $F'$ contained in $\kappa^*(F)$, as desired.
The general case can be reduced to the regular case
by passing to a finite cover of $M'$ regular over $M$.
\end{proof}

\subsection{Weak symplectic fillability}\label{Subsec-contact}
We recall preliminary facts in $3$--dimensional contact topology,
focusing on weakly symplectically fillable contact structures.
See the textbook \cite{OzbS_book} of Ozbagci and Stipsicz for a reference.

Let $M$ be an oriented closed smooth $3$--manifold.
A (positive) \emph{transversely oriented contact structure} on $M$
can be characterized as a transversely oriented plane distribution $\xi$,
such that $\xi$ is equal to the kernel of some smooth $1$--form $\alpha$
everywhere on $M$,
and
such that $\alpha\wedge\ud\alpha$ is a volume form agreeing with the orientation of $M$.
Here, $\alpha$ is not included as part of the data.
The orientation of $M$ and the transverse orientation of $\xi$
together induce a unique, compatible orientation of the planes in $\xi$.
The \emph{Euler class} 
$$e(\xi)\in H^2(M;\Integral)$$ 
refers to the Euler class of $\xi$ as an oriented plane bundle over $M$, 
with the compatibly induced orientation.
The \emph{real Euler class} of $\xi$ refers to the real reduction of $e(\xi)$,
but we often denote it with the same notation,
only declaring it in $H^2(M;\Real)$.

If $\xi$ is a transversely oriented contact structure on $M$,
then the same plane distribution with the opposite transverse orientation
is also a transversely oriented contact structure (as witnessed by $-\alpha$),
which we denote as $-\xi$. Hence,
$$e(-\xi)=-e(\xi).$$
Both $\xi$ and $-\xi$ are
``negative'' transversely oriented contact structures
on the orientation reversal $-M$,
which are not to be considered in this paper.

For any closed, transversely oriented, contact $3$--manifold $(M,\xi)$,
a \emph{weak symplectic filling} of $(M,\xi)$
refers to any compact symplectic $4$--manifold $(W,\omega)$ 
with $\partial W=M$ (in the oriented sense),
such that $\omega|_\xi$ is positive everywhere on $M$.
We say that $(M,\xi)$ is \emph{weakly symplectically fillable}
if such a $(W,\omega)$ exists.
In this case, 
$(W,-\omega)$ is a weak symplectic filling of $(M,-\xi)$.

Weakly symplectically fillable contact structures arise
naturally from transversely orientable taut folations.
Recall that a codimension--$1$ foliation $\mathscr{F}$ on $M$
(with smooth leaves and continuous holonomy)
is said to be \emph{taut} if every leaf intersects some smoothly immersed loop 
which is transverse to all the leaves.
If $M$ supports a transversely oriented taut foliation $\mathscr{F}$,
then the transversely oriented plane distribution $T\mathscr{F}$
can be perturbed homotopically into a pair of 
weakly symplectically fillable contact structures,
one on $+M$, and the other on $-M$,
by constructions due to 
Eliashberg--Thurston \cite[Proposition 3.2.2]{ET_book} and Etnyre \cite[Corollary 1.2]{Etnyre_fillings}.
In particular, the Euler classes of these transversely oriented contact structures
are the same as the Euler class of $\mathscr{F}$ (with the compatibly induced orientation),
namely, $e(\mathscr{F})=e(T\mathscr{F})$ in $H^2(\pm M;\Integral)$, respectively.
However, 
there are weakly symplectically fillable contact $3$--manifolds
without transversely oriented taut foliations of the identical real Euler class,
for example, see \cite{Sivek--Yazdi}.

Weakly symplectically fillable contact $3$--manifolds are all tight.
A contact $3$--manifold is said to be \emph{tight}
if there are no smoothly embedded disks 
with the tangent planes equal to the contact plane distribution 
everywhere on the boundary circle.
Otherwise, it is \emph{overtwisted},
and a witnessing disk is called an \emph{overtwisted disk}.
The tightness of weakly symplectically fillable contact structures
is originally due to Eliashberg and Gromov \cite{Eliashberg--Gromov};
see also \cite[Theorem 12.1.10]{OzbS_book}.
Tight or weakly symplectically fillable contact structures
may become overtwisted when pulled back to finite covers,
contrasting the case with taut foliations.
There are many examples of
oriented connected closed $3$--manifolds
with transversely oriented, tight contact structures
which do not admit weak symplectic fillings,
see \cite[Section 12.1]{OzbS_book}.

The following well-known fact implies the similar property
regarding the real Euler class of a weakly symplectically fillable contact structure,
or of a taut foliation.
The bound is originally due to Eliashberg \cite[Theorem 4.3.8]{OzbS_book},
and the parity property also follows along the lines of the proof.
We include a quick alternative proof using convex surfaces in contact topology,
for the reader's convenience.

\begin{lemma}\label{bound_and_parity}
Let $M$ be an oriented closed $3$--manifold.
Then, the real Euler class $e(\xi)\in H^2(M;\Real)$ 
of any tight contact structure $\xi$ on $M$
is an even lattice point in the dual Thurston norm unit ball $\mathcal{B}_{\mathrm{Th}^*}(M)$.
\end{lemma}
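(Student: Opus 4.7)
The plan is to prove both the even-lattice property and the unit-norm property simultaneously, by showing that $\langle e(\xi), [S] \rangle$ is an even integer bounded in absolute value by the complexity of $S$, for every closed oriented embedded surface $S \subset M$. Since every class in $H_2(M;\Integral)$ modulo torsion is represented by such an $S$, and since $\|\Sigma\|_{\mathrm{Th}}$ is the infimum of complexity over representatives, this will imply simultaneously that $e(\xi)$ is an even lattice point modulo torsion and that $|\langle e(\xi), \Sigma\rangle| \leq \|\Sigma\|_{\mathrm{Th}}$ for every integral $\Sigma$; the latter inequality extends by linearity and density to all real classes, giving $e(\xi) \in \mathcal{B}_{\mathrm{Th}^*}(M)$.

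To evaluate the pairing, I would first apply Giroux's perturbation theorem to $C^\infty$-isotope $S$ into a \emph{convex} surface. Then $S$ carries a dividing set $\Gamma_S$, a disjoint union of simple closed curves, cutting $S$ into regions $S_+ \sqcup S_-$, and Giroux's formula identifies
\[
\langle e(\xi), [S] \rangle \;=\; \chi(S_+) - \chi(S_-).
\]
Since $\chi(S) = \chi(S_+) + \chi(S_-)$ is always even for a closed oriented surface, $\chi(S_+) - \chi(S_-)$ has the same parity as $\chi(S)$ and is therefore even. For the norm bound, I would invoke Giroux's tightness criterion: on any convex surface of positive genus in a tight contact $3$-manifold, no component of $\Gamma_S$ bounds a disk. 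Hence every component of $S_\pm$ is a compact surface with at least one essential boundary component, none of them a disk, and consequently has non-positive Euler characteristic. This gives $\chi(S_\pm) \leq 0$, and therefore
\[
|\chi(S_+) - \chi(S_-)| \;\leq\; (-\chi(S_+)) + (-\chi(S_-)) \;=\; -\chi(S),
\]
matching the complexity of $S$ for positive-genus components.

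For the remaining components, tightness forces the dividing set on a convex sphere to consist of a single circle cutting it into two disks, so $\chi(S_+) - \chi(S_-) = 1-1$ vanishes; on a convex torus, $\Gamma_S$ is a disjoint union of parallel essential curves, so $S_\pm$ are disjoint unions of annuli and the contribution again vanishes. Summing over components of $S$ yields $|\langle e(\xi), [S] \rangle|$ bounded by the complexity of $S$, as required. The main subtlety I anticipate is precisely this treatment of Thurston-null classes: in order for $e(\xi)$ to have finite dual partial norm (rather than infinite) it must pair to zero with classes represented by unions of spheres and tori, and this is exactly where the precise form of Giroux's tightness criterion, together with the separating parity of $\Gamma_S$ on a torus, becomes essential. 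Once these special cases are secured, the rest is routine bookkeeping to assemble the two conclusions.
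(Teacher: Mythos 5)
Your proof takes essentially the same route as the paper's: isotope the representative surface to a convex one, apply the formula $\langle e(\xi),[S]\rangle = \chi(S_+)-\chi(S_-)$, and use the tightness criterion to bound the terms. The parity argument and the inequality are the same in spirit. The main difference is that you are more careful about degenerate components. The paper picks a Thurston-norm minimizing representative $S$ of a primitive class, asserts that tightness forbids disk components of $S_\pm$, and then writes $\|\Sigma\|_{\mathrm{Th}}=\max(-\chi(S),0)=-\chi(S)$ --- all of which silently assume $S$ has no sphere components (where $S_\pm$ are disks and the formula $\max(-\chi(S),0)$ is not the multi-component complexity). You instead verify the bound $|\langle e(\xi),[S]\rangle|\le$ complexity$(S)$ directly for arbitrary closed embedded $S$ and then pass to the infimum, treating sphere components ($\Gamma_S$ a single circle, contribution $1-1=0$) and torus components (parallel essential dividing curves, $S_\pm$ unions of annuli, contribution zero) separately. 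This extra care is genuinely needed to see that $e(\xi)$ has finite dual partial norm, i.e.\ that it pairs to zero against Thurston-null classes, which is an issue the paper's argument glosses over. Both approaches buy the same conclusion; yours is the more airtight version of the same proof.
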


\begin{proof}
Suppose that $(M,\xi)$ is a closed tight contact $3$--manifold.
For any primitive homology class $\Sigma\in H_2(M;\Integral)$, we can represent $\Sigma$
with an embedded, oriented, closed, Thurston-norm minimizing subsurface $S\subset M$.
Moreover, $S$ can be perturbed isotopically into a convex position
with respect to $\xi$ \cite[Proposition 5.1.6]{OzbS_book}.
We refer to \cite[Chapter 5]{OzbS_book} for an introduction to \emph{convex surfaces},
which have been extremely useful for studying tight contact structures on $3$--manifolds.

For our application, it suffices to know what the output is.
Being a convex surface with respect to $\xi$,
$S$ can be decomposed into two compact subsurfaces $S_+$ and $S_-$,
meeting along their common boundary $\Gamma_S=\partial S_+=\partial S_-$.
The common boundary is a disjoint union of simple closed curves (called the \emph{dividing set}),
and every connected component of $S$ contains at least one of these curves.
Moreover, the following formula holds \cite[Proposition 5.1.15]{OzbS_book}:
$$\langle e(\xi),[S]\rangle=\chi(S_+)-\chi(S_-).$$
The tightness of $\xi$ implies that no connected components of $S_\pm$ are disks 
\cite[Theorem 5.1.20]{OzbS_book}. Therefore, we estimate:
$$|\langle e(\xi),[S]\rangle|\leq
|\chi(S_+)|+|\chi(S_-)|=
(-\chi(S_+))+(-\chi(S_-))=-\chi(S).$$
Since $\Sigma=[S]$ and $\|\Sigma\|_{\mathrm{Th}}=\max(-\chi(S),0)=-\chi(S)$,
we conclude that
$\langle e(\xi),\Sigma\rangle$ is an even integer of absolute value at most $\|\Sigma\|_{\mathrm{Th}}$,
and the same conclusion holds for every primitive $\Sigma\in H_2(M;\Integral)$.
This is equivalent to the assertion that $e(\xi)\in H^2(M;\Real)$ 
is an even lattice point in $\mathcal{B}_{\mathrm{Th}^*}(M)$.
\end{proof}

In $3$--dimensional contact topology,
there are several similar notions of convex fillability in the literature.
Stein fillability and holomorphic fillability are equivalent.
They imply exact symplectic fillability.
Exact symplectic fillability implies strong symplectic fillability.
Strong symplectic fillability implies weak symplectic fillability.
None of these implications are equivalences.
A seemingly more relaxed condition, called weak symplectic semi-fillability,
turns out to be equivalent to weak symplectic fillability.
See \cite[Chapter 12]{OzbS_book}; 
see also \cite{Ghiggini_strongly_fillable} and \cite{Bowden_exact_fillable}.

\section{No one-periodic trajectory, no fillable contact structure}\label{Sec-vanishing_criterion}

In this section, 
we obtain a criterion for nonexistence of fillable contact structures
with certain next-to-top real Euler classes on pseudo-Anosov mapping tori,
in term of absence of certain $1$--periodic trajectories (Lemma \ref{no_fillable}).

We fix some notations for pseudo-Anosov mapping tori,
which are frequently referred to in this paper.
For any orientable connected closed surface $S$ of genus $\geq2$,
a \emph{pseudo-Anosov automorphism} $f\colon S\to S$ refers to
an orientation-preserving homeomorphism, such that there exist 
a pair of measured foliations 
$(\mathscr{F}^{\mathtt{u}},\mu^{\mathtt{u}})$ and $(\mathscr{F}^{\mathtt{s}},\mu^{\mathtt{s}})$
and a constant $\lambda>1$,
and they satisfy the properties $f\cdot(\mathscr{F}^{\mathtt{u}},\lambda\mu^{\mathtt{u}})$
and $f\cdot(\mathscr{F}^{\mathtt{u}},\lambda^{-1}\mu^{\mathtt{u}})$.
We require $\mathscr{F}^{\mathtt{u}}$ and $\mathscr{F}^{\mathtt{s}}$ to be transverse to each other,
except at finitely many common singular points. We require $\mathscr{F}^{\mathtt{u}}$ and $\mathscr{F}^{\mathtt{s}}$
to have prong-type singularities at the singular points, with prong number $\geq3$.
Among the isotopy class of $f$, a representative of this standard form is unique up to topological conjugacy.
See \cite{FLP_book} for an exposition of pseudo-Anosov automorphisms on surfaces;
see also \cite[Section 2]{Liu_vhsr} for a summary of facts 
from the perspective of mapping tori.

\begin{notation}\label{pA_notation}
For any pseudo-Anosov automorphism $f\colon S\to S$ on an orientable connected closed surface of genus $\geq2$,
we adopt the following notations.
\begin{enumerate}
\item
The mapping torus $M_f$ of $f$ is constructed as the quotient of $S\times\Real$ by 
the equivalence relation $(f(x),r)\sim(x,r+1)$ for all $(x,r)\in S\times\Real$.
The suspension flow $\theta_t\colon M_f\to M_f$ is 
induced by the action $(x,r)\mapsto (x,r+t)$ on $S\times\Real$ for all $t\in\Real$.
The distinguished fibered class $\phi_f\in H^1(M_f;\Integral)$ 
is (homotopically) represented by 
the fibering projection $M_f\to S^1$ induced by $(x,r)\mapsto r$,
identifying $S^1$ with $\Real/\Integral$.
\item
When $S$ is oriented, $M_f$ is oriented compatibly
with respect to the oriented surface fibers and forward direction of the suspension flow.
The distinguished Euler class $e_f\in H^2(M_f;\Integral)$
refers to the Euler class of the oriented tangent plane distribution of the surface fibers.
The distinguished $\mathrm{Spin}^{\mathtt{c}}$ structure $\mathfrak{s}_\theta$ on $M_f$
is represented by the velocity vector field $\dot{\theta}$ of $\theta_t$.
Hence, there are relations $e_f=c_1(\mathfrak{s}_\theta)$, 
and $\langle e_f,[S]\rangle=\chi(S)$,
and $\phi_f=\mathrm{PD}([S])$.
\end{enumerate}
\end{notation}

\begin{lemma}\label{no_fillable}
Let $S$ be an oriented connected closed surface of genus $\geq3$,
and $f\colon S\to S$ be a pseudo-Anosov automorphism.
Denote by $e_f\in H^2(M_f;\Real)$ 
the distinguished real Euler class (Notation \ref{pA_notation}).

If $a\in H^2(M_f;\Real)$ is an integral lattice point satisfying $\langle a,[S]\rangle=1$,
and if the Poincar\'e dual $\mathrm{PD}(a)\in H_1(M_f;\Real)$
is not represented by any $1$--periodic trajectory of the suspension flow,
then the even lattice point $-e_f-2a$ is not the real Euler class of 
any weakly symplectically fillable contact structure on $M_f$,
nor is $e_f+2a$.
\end{lemma}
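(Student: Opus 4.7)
The plan is to argue by contradiction using the two Heegaard--Floer inputs previewed in the introduction: Lemma \ref{fillable_implies_nonvanishing}, which says that weak symplectic fillability of $(M_f,\xi)$ forces $\widehat{\mathrm{HF}}(-M_f,\mathfrak{s}_\xi)\ne 0$; and Lemma \ref{no_trajectory_implies_vanishing}, which, in the ``next-to-top'' grading $\langle c_1(\mathfrak{s}),[S]\rangle=-\chi(S)-2$, translates such nonvanishing into the existence of a $1$-periodic trajectory of the suspension flow in a first homology class canonically prescribed by $\mathfrak{s}$.

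First I would treat the case $e(\xi)=-e_f-2a$. Since the canonical $\mathrm{Spin}^{\mathtt{c}}$ structure satisfies $c_1(\mathfrak{s}_\xi)=e(\xi)=-e_f-2a$, the grading computation
\[
\langle c_1(\mathfrak{s}_\xi),[S]\rangle=-\langle e_f,[S]\rangle-2\langle a,[S]\rangle=-\chi(S)-2
\]
places us exactly in the hypothesis of Lemma \ref{no_trajectory_implies_vanishing}. Combining it with the nonvanishing supplied by Lemma \ref{fillable_implies_nonvanishing}, I obtain a $1$-periodic suspension trajectory whose homology class in $H_1(M_f;\Real)$ is the one attached to $\mathfrak{s}_\xi$; the normalization intrinsic to Lemma \ref{no_trajectory_implies_vanishing} should identify this class with $\mathrm{PD}(a)$, contradicting the standing hypothesis on $a$. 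For the symmetric case $e(\xi)=e_f+2a$, the pairing $\chi(S)+2$ has the wrong sign for direct input, so I would invoke the standard $\mathrm{Spin}^{\mathtt{c}}$ conjugation symmetry
\[
\widehat{\mathrm{HF}}(-M_f,\mathfrak{s})\;\cong\;\widehat{\mathrm{HF}}(-M_f,\bar{\mathfrak{s}}),\qquad c_1(\bar{\mathfrak{s}})=-c_1(\mathfrak{s}),
\]
to transfer the nonvanishing from $\mathfrak{s}_\xi$ to $\bar{\mathfrak{s}_\xi}$, which now satisfies $c_1(\bar{\mathfrak{s}_\xi})=-e_f-2a$ and meets the grading condition; then the argument of the first case reruns verbatim, again forcing a $1$-periodic trajectory in class $\mathrm{PD}(a)$ and contradicting the hypothesis.

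The step I anticipate as the principal obstacle is the bookkeeping in the correspondence between $\mathrm{Spin}^{\mathtt{c}}$ structures in the next-to-top grading and the ``prescribed'' homology classes of Lemma \ref{no_trajectory_implies_vanishing}. I must verify that the affine identification, which is over $H^2(M_f;\Integral)$ at the source and $H_1(M_f;\Integral)$ at the target, sends the $\mathrm{Spin}^{\mathtt{c}}$ structure with $c_1=-e_f-2a$ to exactly $\mathrm{PD}(a)$, and that $\mathrm{Spin}^{\mathtt{c}}$ conjugation $\mathfrak{s}\mapsto\bar{\mathfrak{s}}$ respects this parametrization so that the same parameter $a$ reappears. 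Possible $2$-torsion in $H^2(M_f;\Integral)$, which would produce several $\mathrm{Spin}^{\mathtt{c}}$ structures with the same first Chern class, also needs to be accommodated. These are sign-and-base-point torsor checks rather than deep arguments, but they are where normalization errors could most easily slip in; I would aim to pin the identification down once and for all inside the statement of Lemma \ref{no_trajectory_implies_vanishing}, and only quote it here.
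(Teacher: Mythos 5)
There is a genuine gap in your plan to feed the conclusion of Lemma \ref{fillable_implies_nonvanishing} directly into Lemma \ref{no_trajectory_implies_vanishing}. The first lemma produces nonvanishing of $\widehat{\mathrm{HF}}(-M_f,\mathfrak{s}_\xi)$, a statement about the \emph{reversed} manifold $-M_f$, while the second is stated for $\widehat{\mathrm{HF}}(M_f,\mathfrak{s}_\theta+\Gamma)$ under the constraint $\langle\mathrm{PD}(\Gamma),[S]\rangle=1$. If you try to solve $\mathfrak{s}_\xi=\mathfrak{s}_\theta+\Gamma$ on $M_f$ with $c_1(\mathfrak{s}_\xi)=e(\xi)=-e_f-2a$, you force $e_f+2\,\mathrm{PD}(\Gamma)=-e_f-2a$, hence $\langle\mathrm{PD}(\Gamma),[S]\rangle=-\chi(S)-1\geq 3$: the hypothesis of Lemma \ref{no_trajectory_implies_vanishing} is not met, and the resulting $\Gamma$ equals $-\mathrm{PD}(e_f)-\mathrm{PD}(a)$, not $\mathrm{PD}(a)$. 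The pairing $\langle c_1(\mathfrak{s}_\xi),[S]\rangle=-\chi(S)-2$ you compute is with respect to the orientation of $+M_f$, whereas the $\mathrm{Spin}^{\mathtt{c}}$ structures $\mathfrak{s}_\theta+\Gamma$ with $\langle\mathrm{PD}(\Gamma),[S]\rangle=1$ covered by the lemma have $\langle c_1,[S]\rangle=\chi(S)+2$ on $+M_f$; these are in different bands.

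The missing step is the identification $-M_f\cong M_{f^{-1}}$, used essentially in the paper's proof. Under it, $\bar{\mathfrak{s}}_\theta$ becomes the distinguished $\mathrm{Spin}^{\mathtt{c}}$ structure of $M_{f^{-1}}$; one writes $\mathfrak{s}_\xi=\bar{\mathfrak{s}}_\theta-\Gamma$, which gives $\Gamma=\mathrm{PD}(a)$, and applies Lemma \ref{no_trajectory_implies_vanishing} to $f^{-1}$ with the shift $-\Gamma$. The hypothesis transfers because $1$--periodic trajectories of the $f^{-1}$--suspension flow are precisely the time reversals of those of $f$, so non-representability of $\mathrm{PD}(a)$ for $f$ gives non-representability of $-\Gamma$ for $f^{-1}$. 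This is a structural move, not one of the ``sign-and-base-point torsor checks'' you flag as routine; without it the two lemmas do not mesh. As a secondary remark, your reduction of the $e_f+2a$ case via $\mathrm{Spin}^{\mathtt{c}}$ conjugation symmetry is valid, though the paper achieves the same reduction more cheaply by noting that $(W,-\omega)$ weakly fills $(M_f,-\xi)$ with $e(-\xi)=-e(\xi)$, avoiding any appeal to Floer symmetry.
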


The rest of this section is devoted to the proof of Lemma \ref{no_fillable}.

Our proof is based on the nonvanishing criterion of weak symplectic fillability 
due to Ozsv\'ath--Szab\'o \cite[Theorem 4.2]{OS_genus},
and the isomorphisms between
several $3$--dimensional Floer homology theories due to Lee--Taubes \cite{LT_hp_swf},
and due to Kutluhan--Lee--Taubes \cite{KLT_hf_hm_i,KLT_hf_hm_ii,KLT_hf_hm_iii,KLT_hf_hm_iv,KLT_hf_hm_v},
and the study of the symplectic Floer homology for pseudo-Anosov mapping classes 
due to Cotton-Clay \cite{Cotton-Clay_sfh}. 

From these essential ingredients, 
it is more or less straightforward to derive Lemma \ref{no_fillable}.
For this reason, our proof below is largely expository.
However, 
Lemma \ref{fillable_implies_nonvanishing} seems to be 
a new observation of independent interest (see Remark \ref{fillable_implies_nonvanishing_remark}).

We recall that for any oriented connected closed $3$--manifold $Y$,
there are three flavors of the Heegaard Floer homology 
$\mathrm{HF}^+(Y)$, $\mathrm{HF}^-(Y)$, and $\mathrm{HF}^\infty(Y)$,
which are relatively $\Integral/2\Integral$--graded $\Integral[U]$--modules,
where $U$ is a fixed indeterminant;
there is another flavor $\widehat{\mathrm{HF}}(Y)$
which is a relatively $\Integral/2\Integral$--graded $\Integral$--module
(or a $\Integral[U]$--module with annihilating $U$--action) \cite{OS_hf_invariants}.
More generally, for any module $V$ over the abelian group ring 
\begin{equation}\label{notation_Lambda}
\Lambda=\Integral[H^1(Y;\Integral)],
\end{equation}
there are corresponding versions with twisted coefficients,
denoted as $\underline{\mathrm{HF}}^+(Y;V)$, $\underline{\mathrm{HF}}^-(Y;V)$, $\underline{\mathrm{HF}}^\infty(Y;V)$,
and $\widehat{\underline{\mathrm{HF}}}(Y;V)$.
These are relatively $\Integral/2\Integral$--graded $\Lambda[U]$--modules \cite[Section 8.1]{OS_hf_properties}.
Each of the flavors naturally splits into direct summands
$$\underline{\mathrm{HF}}^\circ(Y;V)=\bigoplus_{\mathfrak{s}}\underline{\mathrm{HF}}^\circ(Y,\mathfrak{s};V),$$
indexed by the $\mathrm{Spin}^{\mathtt{c}}$ structures $\mathfrak{s}$ of $Y$.
The splitting respects the relative grading and the module structure,
and has only finitely many nonvanishing direct summands.
With the regular $\Lambda$--module $V=\Lambda$,
we obtain the universally twisted versions, which are simply denoted as
$$\underline{\mathrm{HF}}^\circ(Y,\mathfrak{s})=\underline{\mathrm{HF}}^\circ(Y,\mathfrak{s};\Lambda).$$

Recall that the $\mathrm{Spin}^{\mathtt{c}}$ structures on $Y$
forms an affine $H_1(Y;\Integral)$.
That is to say, after choosing 
a reference $\mathrm{Spin}^{\mathtt{c}}$ structure $\mathfrak{s}_0$,
any other $\mathrm{Spin}^{\mathtt{c}}$ on $Y$ can be 
written as $\mathfrak{s}_0+\Gamma$ for some unique $\Gamma\in H_1(Y;\Integral)$.
There is a canonical involution $\mathfrak{s}\mapsto\bar{\mathfrak{s}}$
on the space of $\mathrm{Spin}^{\mathtt{c}}$ on $Y$.
The first Chern class $c_1(\mathfrak{s})\in H^2(Y;\Integral)$
can be determined by the relation $\bar{\mathfrak{s}}=\mathfrak{s}-\mathrm{PD}(c_1(\mathfrak{s}))$.
Therefore, $c_1(\bar{\mathfrak{s}})=-c_1(\mathfrak{s})$, 
and $c_1(\mathfrak{s}+\Gamma)=c_1(\mathfrak{s})+2\mathrm{PD}(\Gamma)$.

\begin{remark}\label{orientation_reversal}
We treat any $\mathrm{Spin}^{\mathtt{c}}$ structure $\mathfrak{s}$ 
on an oriented $3$--manifold $Y$ as represented by a nowhere vanishing vector field,
so the first Chern class $c_1(\mathfrak{s})$ refers to
the Euler class of the compatibly oriented transverse plane distribution.
The same vector field also represents a $\mathrm{Spin}^{\mathtt{c}}$ structure
on the orientation reversal $-Y$, which we denote with the same notation $\mathfrak{s}$.
We adopt the notation $H_*(\pm Y;\Integral)$
for $H_*(Y;\Integral)$ together with the specified fundamental class $\pm[Y]\in H_3(Y;\Integral)$,
which also specifies the Poincar\'e duality isomorphism.
Therefore, when $Y$ flips orientation,
$c_1(\mathfrak{s})$ flips sign in $H^2(+Y;\Integral)=H^2(-Y;\Integral)$, 
but the sign of $\mathrm{PD}(c_1(\mathfrak{s}))$ 
does not alter in $H_1(+Y;\Integral)=H_1(-Y;\Integral)$.
The involution $\mathfrak{s}\mapsto\bar{\mathfrak{s}}$ 
corresponds to reversing the direction of any representative vector field.
\end{remark}

Our proof below only involves twisted versions of the hat flavor and the plus flavor.
These two flavors are related by the following exact triangle of $\Lambda$--module homomorphisms:
\begin{equation}\label{exact_triangle_pph}
\xymatrix{
\underline{\mathrm{HF}}^+(Y,\mathfrak{s};V) \ar[r]^-{U} & 
\underline{\mathrm{HF}}^+(Y,\mathfrak{s};V) \ar[d] \\ 
& \widehat{\underline{\mathrm{HF}}}(Y,\mathfrak{s};V) \ar[lu]
}
\end{equation}
Here, the multiplication by $U$ 
and the downward arrow respect the $\Integral/2\Integral$--grading,
and the upper-leftward arrow switches the $\Integral/2\Integral$--grading.

The \emph{rank} of a module $M$ over a (commutative) domain $R$
refers to the dimension of $M\otimes_R\mathrm{Frac}(R)$
over the field of fractions $\mathrm{Frac}(R)$.
If $V$ is finitely generated over $\Lambda$,
then $\widehat{\underline{\mathrm{HF}}}(Y,\mathfrak{s};V)$ 
is also finitely generated over the Noetherian domain $\Lambda$,
so $\widehat{\underline{\mathrm{HF}}}(Y,\mathfrak{s};V)$ has finite rank over $\Lambda$.
It may happen that $\underline{\mathrm{HF}}^+(Y,\mathfrak{s};V)$
has infinite rank over $\Lambda$ when $c_1(\mathfrak{s})$ is torsion.
However, every element of $\underline{\mathrm{HF}}^+(Y,\mathfrak{s};V)$
is annihilated by some sufficiently large power of $U$,
implying $\underline{\mathrm{HF}}^+(Y,\mathfrak{s};V)=0$
if and only if $\widehat{\underline{\mathrm{HF}}}(Y,\mathfrak{s};V)=0$,
by the exact triangle (\ref{exact_triangle_pph}).

\begin{lemma}\label{fillable_implies_nonvanishing}
If $(Y,\xi)$ is an oriented closed contact $3$--manifold 
which is weakly symplectically fillable,
then
$$\mathrm{rank}_\Integral\,\widehat{\mathrm{HF}}(-Y,\mathfrak{s}_\xi)>0.$$
Here, $\mathfrak{s}_\xi$ denotes the canonical $\mathrm{Spin}^{\mathtt{c}}$ structure of $\xi$,
which is represented by any nowhere vanishing vector field transverse to $\xi$ 
agreeing with its prescribed transverse orientation.
\end{lemma}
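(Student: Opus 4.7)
The plan is to derive Lemma \ref{fillable_implies_nonvanishing} from Ozsv\'ath--Szab\'o's twisted-coefficient nonvanishing theorem \cite[Theorem 4.2]{OS_genus}, by stripping off the twisted coefficients at the end.

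First, for any weak symplectic filling $(W,\omega)$ of $(Y,\xi)$, that theorem produces a specific $\Lambda$-module $V=V_{[\omega|_Y]}$, determined by the restriction $[\omega|_Y]\in H^2(Y;\Real)$, together with a nontrivial twisted contact invariant in the plus-flavor twisted Heegaard Floer homology $\underline{\mathrm{HF}}^+(-Y,\mathfrak{s}_\xi;V)$. In particular this plus-flavor twisted group is nonzero. I would then pass to the hat flavor with the same twisted coefficients: since $U$ acts nilpotently on every class in $\underline{\mathrm{HF}}^+(-Y,\mathfrak{s}_\xi;V)$, the argument that appears in the paragraph immediately following the exact triangle (\ref{exact_triangle_pph}) applies verbatim to $V$-twisted coefficients and forces $\widehat{\underline{\mathrm{HF}}}(-Y,\mathfrak{s}_\xi;V)\neq 0$.

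The principal obstacle is the descent from $V$-twisted coefficients to untwisted $\Integral$-coefficients in a way that preserves positive $\Integral$-rank. Since the hat-flavor Heegaard Floer chain complex is free over $\Lambda$, a universal coefficient spectral sequence with $E_2$-page $\mathrm{Tor}^\Lambda_\bullet(\widehat{\underline{\mathrm{HF}}}(-Y,\mathfrak{s}_\xi;\Lambda),V)$ converges to $\widehat{\underline{\mathrm{HF}}}(-Y,\mathfrak{s}_\xi;V)$; nonvanishing of the abutment forces the universally twisted hat homology $\widehat{\underline{\mathrm{HF}}}(-Y,\mathfrak{s}_\xi;\Lambda)$ to be nonzero as a finitely generated $\Lambda$-module. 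To then upgrade nonzeroness to positive $\Integral$-rank of $\widehat{\mathrm{HF}}(-Y,\mathfrak{s}_\xi)=\widehat{\underline{\mathrm{HF}}}(-Y,\mathfrak{s}_\xi;\Lambda)\otimes_\Lambda\Integral$, I plan to exploit the explicit Novikov-type structure of $V$ --- essentially $\Real$ with $\Lambda$ acting through a character determined by $[\omega|_Y]$ --- together with the openness of the weak filling condition, which allows a small perturbation of $\omega$ within the convex cone of weak filling forms. For a generic such perturbation, the twisting character becomes sufficiently generic that $V$-twisted nonvanishing detects positive $\Lambda$-rank rather than mere nonvanishing, so that $\widehat{\underline{\mathrm{HF}}}(-Y,\mathfrak{s}_\xi;\Lambda)$ has strictly positive rank over $\Lambda$. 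A standard structural argument for positive-rank finitely generated modules over the Laurent polynomial ring $\Lambda\otimes\Rational$ --- producing a free rank-one summand up to isogeny after rationalization --- then yields a nonzero class in $\widehat{\mathrm{HF}}(-Y,\mathfrak{s}_\xi)\otimes\Rational$ via the augmentation $\Lambda\to\Integral$, giving the desired positivity of $\mathrm{rank}_\Integral\,\widehat{\mathrm{HF}}(-Y,\mathfrak{s}_\xi)$. This rank-transfer step is the technical heart of the lemma, and the place where the most careful bookkeeping of the twisting module is required.
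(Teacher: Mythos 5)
Your opening moves agree with the paper: you invoke \cite[Theorem 4.2]{OS_genus}, pass from the plus flavor to the hat flavor via the exact triangle, and recognize that the real work is descending from twisted to untwisted coefficients. But your route for the descent has a genuine gap, and it is not the route the paper takes.

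The universal-coefficient step only establishes that $\widehat{\underline{\mathrm{HF}}}(-Y,\mathfrak{s}_\xi;\Lambda)$ is a \emph{nonzero} $\Lambda$--module, which is much weaker than positive $\Lambda$--rank: a $\Lambda$--torsion module can have nonzero $V$--twisted homology for a given $V$. You acknowledge this and propose to bridge it by perturbing $\omega$ within the cone of weak filling forms so that the Novikov character $\omega_*$ becomes ``sufficiently generic,'' but no argument is offered for why genericity of the character would detect positive $\Lambda$--rank, and the plan conflates two different kinds of genericity. The specialization we actually need to understand is the augmentation $\Lambda\to\Integral$ (all group elements sent to $1$), which is a fixed, maximally special point; making $\omega_*$ more generic does nothing to control the behavior of the chain complex at this other, special specialization. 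What does work is upper semicontinuity of homology rank for a free chain complex under specialization — but for that you need positive rank at the \emph{generic} fiber to begin with, which you have not established.

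The paper gets around this precisely by not discarding the sharper content of \cite[Theorem 4.2]{OS_genus}: the theorem does not merely give nonvanishing, it produces a $\Integral$--primitive, $\Integral[\Real]$--\emph{nontorsion} element of $\widehat{\underline{\mathrm{HF}}}(-Y,\mathfrak{s}_\xi;\Integral[\Real]^\omega)$. The paper then works over the intermediate domain $\Omega=\Integral[\mathrm{Im}(\omega_*)]$ (the image of $\Lambda$ in $\Integral[\Real]$), rather than going up to the universally twisted $\Lambda$. Flatness of $\Integral[\Real]$ over $\Omega$ lets one pull the nontorsion element back to conclude $\mathrm{rank}_\Omega\,\widehat{\underline{\mathrm{HF}}}(-Y,\mathfrak{s}_\xi;\Omega)>0$. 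Since the untwisted complex is the specialization of the free $\Omega$--complex at the trivial character $\Omega\to\Integral$, semicontinuity of the rank of the boundary operator gives $\mathrm{rank}_\Integral\,\widehat{\mathrm{HF}}(-Y,\mathfrak{s}_\xi)\geq\mathrm{rank}_\Omega\,\widehat{\underline{\mathrm{HF}}}(-Y,\mathfrak{s}_\xi;\Omega)>0$, with no need to pass through $\Lambda$ or to perturb $\omega$. The two concrete ingredients your proposal is missing are (a) using the nontorsion statement rather than mere nonvanishing, and (b) doing the bookkeeping over $\Omega$ (through which the relevant augmentation factors) rather than over $\Lambda$.
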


\begin{proof}
We derive Lemma \ref{fillable_implies_nonvanishing}
from \cite[Theorem 4.2]{OS_genus} due to Ozsv\'ath and Szab\'o.
To be precise, Ozsv\'ath and Szab\'o show that 
if $(W,\omega)$ is a weakly symplectic filling of an oriented connected closed contact $3$--manifold $(Y,\xi)$,
then the twisted Floer homology group 
$\widehat{\underline{\mathrm{HF}}}(-Y,\mathfrak{s}_\xi;\Integral[\Real]^\omega)$
contains a $\Integral$--primitive, $\Integral[\Real]$--nontorsion element 
(see Remark \ref{fillable_implies_nonvanishing_remark} for technical elaboration).
Here, the twisting coefficient is taken to be the abelian group ring $\Integral[\Real]$
treated as the specialization of $\Lambda$ (see (\ref{notation_Lambda}))
induced by the abelian group homomorphism
$\omega_*\colon H^1(Y;\Integral)\to \Real\colon x\mapsto \langle [\omega], \mathrm{PD}(x)\rangle$.
The specialization depends only 
on the restriction of the de Rham cohomology class $[\omega]\in H^2(W;\Real)$ 
to $Y=\partial W$.

Denote by $\Omega=\Integral[\mathrm{Im}(\omega)]$ 
the image of the specialization homomorphism $\Lambda\to\Integral[\Real]$ induced by $\omega_*$.
We observe that $\Integral[\Real]$ is flat over the subdomain $\Omega$.

By definition, $\widehat{\underline{\mathrm{HF}}}(-Y,\mathfrak{s}_\xi)$
is the homology of a chain complex of finitely generated free $\Lambda$--modules
$(\widehat{\underline{\mathrm{CF}}}(-Y,\mathfrak{s}_\xi),\widehat{\underline{\partial}})$,
and $\widehat{\underline{\mathrm{HF}}}(-Y,\mathfrak{s}_\xi;V)$
is the homology of the chain complex of $\Lambda$--modules
$(\widehat{\underline{\mathrm{CF}}}(-Y,\mathfrak{s}_\xi)\otimes_\Lambda V,
\widehat{\underline{\partial}}\otimes 1)$.
Flatness implies
$\widehat{\underline{\mathrm{HF}}}(-Y,\mathfrak{s}_\xi;\Omega)\otimes_{\Omega} \Integral[\Real]
\cong \widehat{\underline{\mathrm{HF}}}(-Y,\mathfrak{s}_\xi; \Integral[\Real]^\omega)$
as $\Lambda$--modules.
In particular, 
the finitely generated $\Lambda$--module
$\widehat{\underline{\mathrm{HF}}}(-Y,\mathfrak{s}_\xi;\Omega)$ 
is not $\Omega$--torsion.
Equivalently, we obtain
\begin{equation}\label{omega_nonvanishing}
\mathrm{rank}_{\Omega}\,\widehat{\underline{\mathrm{HF}}}(-Y,\mathfrak{s}_\xi;\Omega)>0.
\end{equation}

The untwisted version $\widehat{\mathrm{HF}}(-Y,\mathfrak{s}_\xi)$
is the homology of the chain complex of finitely generated free $\Integral$--modules
$(\widehat{\mathrm{CF}}(-Y,\mathfrak{s}_\xi),\widehat{\partial})$
recovered from 
$(\widehat{\underline{\mathrm{CF}}}(-Y,\mathfrak{s}_\xi),\widehat{\underline{\partial}})$
by the trivial specialization $\Lambda\to\Integral$,
(sending every group element in $H^1(Y;\Integral)$ to $1$).
We can also recover $\widehat{\mathrm{CF}}(-Y,\mathfrak{s}_\xi)$
from $\widehat{\underline{\mathrm{CF}}}(-Y,\mathfrak{s}_\xi;\Omega)$
by the trivial specialization $\Omega\to\Integral$.
A semicontinuity argument as usual in homological algebra implies
\begin{equation}\label{omega_semicontinuous}
\mathrm{rank}_\Integral\,\widehat{\mathrm{HF}}(-Y,\mathfrak{s}_\xi)
\geq \mathrm{rank}_{\Omega}\,\widehat{\underline{\mathrm{HF}}}(-Y,\mathfrak{s}_\xi;\Omega).
\end{equation}
In fact, over a basis one may represent the boundary operator
$\widehat{\underline{\partial}}$ of 
$\widehat{\underline{\mathrm{CF}}}(-Y,\mathfrak{s}_\xi;\Omega)$
as a square matrix over $\Omega$, and $\widehat{\partial}$ is the specialized matrix over $\Integral$.
The rank of $\widehat{\partial}$ over $\Rational$ cannot decrease under
sufficiently small perturbation of any specialization $\mathrm{Frac}(\Omega)\to\Rational$.
Meanwhile, the rank under a generic (small) perturbation 
is equal to the rank of $\widehat{\underline{\partial}}$ over $\Omega$.
Therefore, 
the rank of $\widehat{\partial}$ over $\Integral$ is a lower bound 
for the rank of $\widehat{\underline{\partial}}$ over $\mathrm{Frac}(\Omega)$,
and hence, 
the nullity of $\widehat{\partial}$ over $\Integral$ is 
an upper bound for the nullity of $\widehat{\underline{\partial}}$ over $\Omega$.
Then the above inequality follows immediately.

The asserted inequality as in Lemma \ref{fillable_implies_nonvanishing}
follows from the inequalites (\ref{omega_nonvanishing}) and (\ref{omega_semicontinuous}).
\end{proof}

\begin{remark}\label{fillable_implies_nonvanishing_remark}\
\begin{enumerate}
\item 
For any module $V$ over $\Lambda$ (see (\ref{notation_Lambda})), 
Ozsv\'ath and Szab\'o construct an element
$\underline{c}(\xi;V)\in \widehat{\underline{\mathrm{HF}}}(-Y,\mathfrak{s}_\xi;V)$,
which depends only on $(Y,\xi)$
up to a unit in $\Lambda$ 
and up to an isomorphism of $\widehat{\underline{\mathrm{HF}}}(-Y,\mathfrak{s}_\xi;V)$. 
The image of $\underline{c}(\xi;V)$ in $\underline{\mathrm{HF}}^+(-Y,\mathfrak{s}_\xi;V)$ 
is denoted as $\underline{c}^+(\xi;V)$ (see (\ref{exact_triangle_pph})).
Twisting with $\Integral[\Real]^\omega$, 
as in the proof of Lemma \ref{fillable_implies_nonvanishing},
the argument of \cite[Theorem 4.2]{OS_genus} actually obtains
some $\Integral[\Real]$--linear homomorphism 
$\underline{\mathrm{HF}}^+(-Y,\mathfrak{s}_\xi;\Integral[\Real]^\omega)\to\Integral[\Real]$
(to be precise, $\underline{F}^+_{W-B_2}$ therein; 
see also \cite[Section 3.1]{OS_smooth_four} for definition and properties).
That argument shows that 
the image of $\underline{c}^+(\xi;\Integral[\Real]^\omega)$ under the homomorphism 
is nonzero in $\Integral[\Real]$, 
and has coefficient $\pm1$ in its lowest order term
(viewed as a generalized Laurent polynomial allowing real powers of the indeterminant).
It follows that both $\underline{c}^+(\xi;\Integral[\Real]^\omega)$ 
and $\underline{c}(\xi;\Integral[\Real]^\omega)$
are $\Integral$--primitive and $\Integral[\Real]$--nontorsion.
This clarifies an omitted detail in the statement of \cite[Theorem 4.2]{OS_genus}.
\item
Lemma \ref{fillable_implies_nonvanishing} 
does not mean $c(\xi)\neq0$ in $\widehat{\mathrm{HF}}(-Y,\mathfrak{s}_\xi)$.
See Ghiggini \cite{Ghiggini_vanishing_os} 
for vanishing examples with modulo $2$ coefficients.
See Hedden and Ni \cite[Theorem 2.2]{Hedden--Ni} 
for a similar nonvanishing theorem
with respect to taut surfaces.
\item
See Hedden and Tovstopyat-Nelip \cite{Hedden--Tovstopyat-Nelip} for 
the naturality of the Ozsv\'ath--Szab\'o contact invariant $c(\xi)$
with modulo $2$ coefficients, 
and for references regarding more general coefficients.
\end{enumerate}
\end{remark}

\begin{lemma}\label{no_trajectory_implies_vanishing}
Let $S$ be an oriented connected closed surface of genus $\geq3$,
and $f\colon S\to S$ be a pseudo-Anosov automorphism.
Denote by $\mathfrak{s}_\theta$ 
the distinguished $\mathrm{Spin}^{\mathtt{c}}$ structure on $M_f$ (Notation \ref{pA_notation}).

If $\Gamma\in H_1(M_f;\Integral)$ satisfies $\langle \mathrm{PD}(\Gamma),[S]\rangle=1$,
and if $\Gamma$ is not represented by any $1$--periodic trajectory of the suspension flow,
then
$$\widehat{\mathrm{HF}}(M_f,\mathfrak{s}_\theta+\Gamma)=0.$$
\end{lemma}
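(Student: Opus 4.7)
The plan is to identify $\widehat{\mathrm{HF}}(M_f,\mathfrak{s}_\theta+\Gamma)$, via the chain of equivalences cited in the introduction, with a Floer-theoretic chain complex whose generators are in canonical bijection with the $1$--periodic trajectories of the suspension flow representing the class $\Gamma$. The hypothesis then forces the complex to be empty, and hence its homology to vanish.

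First, I would verify the relevant grading. By Notation \ref{pA_notation}, one computes $c_1(\mathfrak{s}_\theta+\Gamma)=e_f+2\mathrm{PD}(\Gamma)$, so $\langle c_1(\mathfrak{s}_\theta+\Gamma),[S]\rangle=\chi(S)+2$, whose absolute value is $-\chi(S)-2$. The assumption $\mathrm{genus}(S)\geq 3$ gives $-\chi(S)-2\geq 2$, so $\mathfrak{s}_\theta+\Gamma$ lies strictly inside the \emph{next-to-top} stratum allowed by the adjunction inequality with respect to the fiber, and strictly away from any torsion stratum; this is precisely the regime addressed by the Cotton-Clay description. The conjugation symmetry $\widehat{\mathrm{HF}}(M_f,\mathfrak{s})\cong\widehat{\mathrm{HF}}(M_f,\bar{\mathfrak{s}})$ can be invoked if needed to match the sign conventions used in the subsequent identifications.

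Second, I would string together the Kutluhan--Lee--Taubes equivalence $\widehat{\mathrm{HF}}(Y,\mathfrak{s})\cong\widehat{\mathrm{HM}}(Y,\mathfrak{s})$ with the Lee--Taubes equivalence identifying the relevant summand of $\widehat{\mathrm{HM}}(M_f,\mathfrak{s}_\theta+\Gamma)$ with the $\Gamma$--graded summand of the periodic Floer homology $\mathrm{PFH}(S,f)$ of the surface automorphism $f$ (the $\mathrm{Spin}^{\mathtt{c}}$ structure on $M_f$ determines the $H_1(M_f;\Integral)$--grading on $\mathrm{PFH}$ under these equivalences). Then I would invoke Cotton-Clay's explicit computation for pseudo-Anosov $f$: in the next-to-top stratum, the chain-level generators of the $\Gamma$--summand of $\mathrm{PFH}(S,f)$ with $\langle \mathrm{PD}(\Gamma),[S]\rangle=1$ are indexed by single $1$--periodic orbits of the suspension flow $\theta_t$ on $M_f$ representing $\Gamma$, equivalently by fixed points of $f$ whose vertical traces in $M_f$ represent $\Gamma$. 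Since the hypothesis excludes any such $1$--periodic trajectory, the chain complex has no generators, and its homology vanishes.

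The principal obstacle I foresee is the careful bookkeeping required to ensure that $\mathfrak{s}_\theta+\Gamma$ matches, under the concatenated isomorphisms, the Cotton-Clay complex indexed by $1$--periodic trajectories in precisely the class $\Gamma$ (rather than $-\Gamma$, an orientation-reversed analog, or a translate by a background class). The genus $\geq 3$ hypothesis is what keeps us clear of both the top stratum and the torsion-$c_1$ stratum, so that Cotton-Clay's description reduces cleanly to single fixed-point orbits without contamination from multi-orbit configurations or the degenerate phenomena that can occur at extremal gradings.
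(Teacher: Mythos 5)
Your proposal follows essentially the same route as the paper: reduce the question to periodic Floer homology via the Kutluhan--Lee--Taubes and Lee--Taubes equivalences, then apply Cotton-Clay's description of $\mathrm{HP}(f,\Gamma)$ for pseudo-Anosov $f$ in the next-to-top degree. The grading computation $\langle c_1(\mathfrak{s}_\theta+\Gamma),[S]\rangle = \chi(S)+2$, and the role of genus $\geq 3$ in keeping $c_1$ non-torsion, match the paper exactly.

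One small difference worth noting: the paper cites Cotton-Clay's full ``nontrivial if and only if'' theorem at the homology level, whereas you argue at the chain level (no generators in class $\Gamma$, so vanishing homology), which is the easier half of that equivalence. Your version is logically more economical, but it requires knowing the precise form of Cotton-Clay's chain complex for pseudo-Anosov maps -- in particular that generators in the $\langle\mathrm{PD}(\Gamma),[S]\rangle=1$ sector are cleanly indexed by single $1$--periodic trajectories (or, equivalently, fixed points of $f$), including correct treatment of singular fixed points. The paper sidesteps this by citing the homology-level theorem directly. Also, technically the Kutluhan--Lee--Taubes/Lee--Taubes isomorphism is most naturally stated as $\mathrm{HF}^+\cong\mathrm{HP}$, so the paper first records the equivalence $\widehat{\mathrm{HF}}=0\iff\mathrm{HF}^+=0$ via the exact triangle $(\ref{exact_triangle_pph})$ before invoking it; your statement $\widehat{\mathrm{HF}}\cong\widehat{\mathrm{HM}}$ is essentially correct but glosses over which flavors are being matched. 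These are minor bookkeeping points; the argument as you propose it would go through.
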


\begin{proof}
The conclusion $\widehat{\mathrm{HF}}(M_f,\mathfrak{s}_\theta+\Gamma)=0$
is equivalent to $\mathrm{HF}^+(M_f,\mathfrak{s}_\theta+\Gamma)=0$,
as we have explained.

When $S$ has genus $\geq3$, 
(the first Chern class of) 
the $\mathrm{Spin}^{\mathtt{c}}$ structure $\mathfrak{s}_\theta+\Gamma$ is not torsion,
indeed, $\langle c_1(\mathfrak{s}_\theta+\Gamma),[S]\rangle = \chi(S)+2<0$.
In this case,
the plus-flavor Heegaard Floer homology at $\mathfrak{s}_\theta+\Gamma$
is isomorphic to the periodic Floer homology at $\Gamma$,
\begin{equation}\label{hf_to_hp}
\mathrm{HF}^+(M_f,\mathfrak{s}_\theta+\Gamma)\cong\mathrm{HP}(f,\Gamma),
\end{equation}
(as relatively $\Integral/2\Integral$--graded $\Integral$--modules, for simplicity).
To be more precise, 
$\mathrm{HP}(f;\Gamma)$ is actually defined up to natural isomorphism
for any mapping class $[f]\in \mathrm{Mod}(S)$ 
and a homology class $\Gamma\in H_1(M_f;\Integral)$
with $\langle \mathrm{PD}(\Gamma),[S]\rangle>0$.
The case with $\langle \mathrm{PD}(\Gamma),[S]\rangle=1$
is formerly introduced as the symplectic Floer homology.
The isomorphism (\ref{hf_to_hp}) follows from the aforementioned works
due to Kutluhan--Lee--Taubes and due to Lee--Taubes;
we refer the reader to \cite[Appendix A]{Liu_ent_vs_vol} 
for a summary with more details.

For $[f]\in\mathrm{Mod}(S)$ pseudo-Anosov, 
and for $\langle \mathrm{PD}(\Gamma),[S]\rangle=1$,
Cotton-Clay shows that $\mathrm{HP}(f,\Gamma)$
is nontrivial if and only if 
$\Gamma$ is represented by 
some $1$--periodic trajectory of the pseudo-Anosov suspension flow \cite[Theorem 3.6]{Cotton-Clay_sfh}.
Therefore,
if $S$ has genus $\geq3$, and if $\Gamma$ is not represented by 
any $1$--periodic trajectory of the pseudo-Anosov suspension flow.
we deduce $\widehat{\mathrm{HF}}(M_f,\mathfrak{s}_\theta+\Gamma)=0$, as asserted.
\end{proof}

With the above results, we finish the proof of Lemma \ref{no_fillable} as follows.

To argue by contradiction, suppose that $M_f$ supports a weakly fillable contact structure $\xi$
of real Euler class $\pm(e_f+2a)$ with $\langle a,[S]\rangle=1$.
Note that if $(W,\omega)$ is a weakly symplectic filling of $(M_f,\xi)$,
then $(W,-\omega)$ is a weakly symplectic filling of $(M_f,-\xi)$,
where $-\xi$ denotes the transverse-orientation reversal of $\xi$,
satisfying $e(-\xi)=-e(\xi)$.
Therefore, passing to $-\xi$ if necessary,
we may assume without loss of generality 
$$e(\xi)=-e_f-2a.$$
Denote by $\Gamma\in H_1(M_f;\Integral)$,
such that 
$$\mathfrak{s}_\xi=\bar{\mathfrak{s}}_\theta-\Gamma.$$
From $c_1(\bar{\mathfrak{s}}_\theta)=-c_1(\mathfrak{s}_\theta)=-e_f$ and $c_1(\mathfrak{s}_\xi)=e(\xi)=-e_f-2a$,
we obtain $\Gamma=\mathrm{PD}(a)$ in $H_1(M_f;\Real)$.

Applying Lemma \ref{fillable_implies_nonvanishing},
we infer 
$$\widehat{\mathrm{HF}}(-M_f,\mathfrak{s}_\xi)\neq0.$$

On the other hand, we can identify $-M_f$ with $+M_{f^{-1}}$,
so $\bar{\mathfrak{s}}_\theta$ is identified with
the distinguished $\mathrm{Spin}^{\mathtt{c}}$ structure of $M_{f^{-1}}$.
Our assumption implies that $-\Gamma$ in 
$H_1(M_f;\Integral)=H_1(M_{f^{-1}};\Integral)$
is not represented by any $1$--periodic trajectory of $f^{-1}$.
Applying Lemma \ref{no_trajectory_implies_vanishing} to $f^{-1}$,
and using $\mathfrak{s}_\xi=\bar{\mathfrak{s}}_\theta-\Gamma$,
we infer 
$$\widehat{\mathrm{HF}}(-M_f,\mathfrak{s}_\xi)=0,$$
contradicting the last inequality.

This completes the proof of Lemma \ref{no_fillable}.

\section{Periodic directions are virtually rare above boundary}\label{Sec-rare}

In this section, we build the technical heart of 
our construction for Theorem \ref{main_ecoc_fillable}.
This is the construction which allows us to virtually manipulate
the homology directions of periodic trajectories,
making them genercally rare 
on a wanted face of the Fried cone boundary (Lemma \ref{virtual_vacuum_directions}).

\begin{notation}\label{corner_A}
Suppose that $\mathcal{P}\subset V$ be a compact polytope in a finite-dimensional vector linear space.
Suppose that $F\subset\partial\mathcal{P}$ is a closed face and $v\in F$ is a vertex. 
Denote by
$$\mathcal{A}(v,F)\subset V$$
the set of all vectors $a\in V$,
such that $v+\epsilon\,a\in F$ holds for all sufficiently small $\epsilon\geq 0$.
This is a scaling-invariant closed subset of $V$,
which can be thought of as the \emph{corner} of $F$ at $v$ up to translation.
\end{notation}

\begin{lemma}\label{virtual_vacuum_directions}
Let $S$ be an oriented connected closed surface of genus $\geq2$,
and $f\colon S\to S$ be a pseudo-Anosov automorphism.
Suppose that $F\subset\partial\mathcal{B}_{\mathrm{Th}^*}(M_f)$
is a closed face with $e_f\in \partial F$ (Notation \ref{pA_notation}).
Then, there exist
some connected regular finite cover $\tilde{M}\to M_f$,
and some finite collection $\tilde{\mathcal{Z}}$ 
of real linear subspaces of $H^2(\tilde{M};\Real)$,
such that the following properties hold.
\begin{itemize}
\item
If $\tilde{\gamma}$ is any periodic trajectory of the pullback pseudo-Anosov flow on $\tilde{M}$,
such that the Poincar\'e dual $\mathrm{PD}([\tilde\gamma])$ 
lies in $\mathcal{A}(\tilde{e},\tilde{F})$,
then there is some $\tilde{Z}\in\tilde{\mathcal{Z}}$,
such that $\mathrm{PD}([\tilde\gamma])$ lies in $\tilde{Z}\cap\mathcal{A}(\tilde{e},\tilde{F})$.
\item
For any $\tilde{Z}\in\tilde{\mathcal{Z}}$,
the subset $\tilde{Z}\cap\mathcal{A}(\tilde{e},\tilde{F})$ in $\mathcal{A}(\tilde{e},\tilde{F})$
has positive codimension.
\end{itemize}
Here,
$\tilde{F}$ denotes the minimal closed face of $\mathcal{B}_{\mathrm{Th}^*}(\tilde{M})$ that contains the pullback of $F$
(see Lemma \ref{dual_ball_cover});
$\tilde{e}\in \partial\tilde{F}$ denotes the pullback of $e_f$,
which is again a vertex, dual to a distinguished fibered face;
see Notation \ref{corner_A} for $\mathcal{A}(\tilde{e},\tilde{F})\subset H^2(\tilde{M};\Real)$.
\end{lemma}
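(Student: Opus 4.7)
The plan is to combine Lemma \ref{facial_clusters} with the virtual compact specialization of closed hyperbolic $3$--manifold groups due to Agol and Wise. Via the Poincar\'e duality identification $H^2(M_f;\Real)\cong H_1(M_f;\Real)$, the corner cone $\mathcal{A}(e_f,F)$ is a cone of homology directions that picks out a boundary face of the Fried cone of $M_f$. Applying Lemma \ref{facial_clusters} to that Fried-cone face yields a finite family $H_1,\dots,H_k\leq\pi_1(M_f)$ of infinite-index quasiconvex subgroups such that, up to conjugacy, every periodic trajectory whose Poincar\'e dual lies in $\mathcal{A}(e_f,F)$ is generated by an element of some $H_i$.

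Since $\pi_1(M_f)$ is virtually compact special by Agol and Wise, the Haglund--Wise theorem realizes each $H_i$ as a virtual retract. I would pass to a regular finite cover $\tilde{M}\to M_f$ whose fundamental group lies inside every relevant retract finite-index subgroup, so that for each $i$ and for each conjugate $gH_ig^{-1}$ arising among the lifts of trajectories, there is a retraction $\pi_1(\tilde{M})\to gH_ig^{-1}\cap\pi_1(\tilde{M})$. Such a retraction presents $V_{i,g}=\mathrm{im}\bigl(H_1(gH_ig^{-1}\cap\pi_1(\tilde{M});\Rational)\to H_1(\tilde{M};\Rational)\bigr)$ as a direct summand of $H_1(\tilde{M};\Rational)$, proper because $H_i$ has infinite index in $\pi_1(M_f)$.

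Next, take $\tilde{\mathcal{Z}}$ to be the finite collection of subspaces $\tilde{Z}_{i,g}=\mathrm{PD}(V_{i,g}\otimes_\Rational\Real)\subset H^2(\tilde{M};\Real)$. For the first bullet, let $\tilde{\gamma}$ be a periodic trajectory on $\tilde{M}$ with $\mathrm{PD}([\tilde{\gamma}])\in\mathcal{A}(\tilde{e},\tilde{F})$, and let $\gamma$ be its projection to $M_f$. Lemma \ref{dual_ball_cover}, applied up to rescaling by the covering degree, places $\mathrm{PD}([\gamma])$ inside $\mathcal{A}(e_f,F)$. Hence a generator of $\gamma$ is conjugate into some $H_i$, a generator of $\tilde{\gamma}$ lies in the corresponding $gH_ig^{-1}\cap\pi_1(\tilde{M})$, and $\mathrm{PD}([\tilde{\gamma}])$ is forced into $\tilde{Z}_{i,g}$.

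The main obstacle is the second bullet: arranging that each $\tilde{Z}_{i,g}\cap\mathcal{A}(\tilde{e},\tilde{F})$ has positive codimension inside $\mathcal{A}(\tilde{e},\tilde{F})$ itself, rather than merely being a proper subspace of $H^2(\tilde{M};\Real)$. This amounts to producing a cohomology class that vanishes on $V_{i,g}$ but pairs nontrivially with some point of $\mathcal{A}(\tilde{e},\tilde{F})$. To secure this I would enrich $\tilde{M}$ with additional finite covers built from the RAAG structure of compact special groups, in the spirit of the earlier virtual constructions \cite{Liu_vhsr,Liu_profinite_almost_rigidity}, so as to augment $b_1(\tilde{M})$ by classes dual to directions in $\mathcal{A}(\tilde{e},\tilde{F})$ yet invariant under the retractions onto each $gH_ig^{-1}\cap\pi_1(\tilde{M})$. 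Managing this dimension enhancement while preserving the retract structure is the technically demanding step of the argument.
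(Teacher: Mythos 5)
Your proposal tracks the paper's strategy closely through the use of Lemma \ref{facial_clusters}, quasiconvexity, and virtual retraction via Haglund--Wise, and you correctly diagnose the crux: it is not enough for $\tilde{Z}_{i,g}$ to be a proper subspace of $H^2(\tilde{M};\Real)$; you must arrange that $\tilde{Z}_{i,g}\cap\mathcal{A}(\tilde{e},\tilde{F})$ is a proper slice of $\mathcal{A}(\tilde{e},\tilde{F})$. However, you leave this step unresolved, proposing only a vague ``dimension enhancement'' via RAAG-structured covers, and that is where the argument is genuinely incomplete. The paper closes this gap by a concrete device which your outline misses.

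The missing idea is this: by Lemma \ref{Fried_cone_more}(1) (a refinement of Fried's characterization of the cone of homology directions), the face $\mathcal{A}(e_f;F)$ already contains $\mathrm{PD}^{-1}([\eta])$ for some genuine periodic trajectory $\eta$ --- you do not need to manufacture a direction in the corner cone by artificially inflating $b_1$. The paper then builds $\eta$ into the separation step (Lemma \ref{virtual_homological_separation}): for each $H_i$, choose a representative $x$ of $\eta$ whose axis is disjoint from the convex hull of $H_i$ in $\Hyp^3$, so that $H_i$ and $\langle x\rangle$ ping-pong to generate a quasiconvex free amalgam $H_i * \langle x\rangle$; apply virtual retraction to this larger subgroup. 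In the resulting intermediate cover $M'_i$, the retraction exhibits $H_1(M'_i;\Real)$ as containing $\mathrm{PD}(W'_i)\oplus\Real\cdot[\eta']$ as a direct summand, where $\eta'$ is the corresponding lift of $\eta$. Hence $[\eta']\notin\mathrm{PD}(W'_i)$, and since $\eta'$ is a periodic trajectory lying above $\eta$, its Poincar\'e dual automatically lands in the lifted corner cone $\mathcal{A}(e'_i;F'_i)$ (Lemma \ref{dual_ball_cover}). That $\eta'$ is the explicit witness of positive codimension within the corner, and it passes up to $\tilde{M}$ under the umkehr preimage. Without this trajectory-as-witness trick --- encoding the witness into the free amalgam before taking the retract --- your ``augment $b_1$'' plan has no guarantee of producing a class that simultaneously (i) kills the retract image and (ii) is Poincar\'e dual to a vector actually in the corner cone, since classes of the latter kind are constrained by the polytope combinatorics and cannot be created at will by passing to covers.

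A secondary remark: your step of passing to a regular cover contained in ``every relevant retract finite-index subgroup'' is fine, but the paper organizes this more transparently by applying the separation lemma once per $H_i$ to get $M'_i$, intersecting the $\pi_1(M'_i)$, and taking $\tilde{\mathcal{Z}}$ to be the umkehr preimages of the $\mathcal{Z}'_i$; the deck-invariance you would need for conjugates is already baked into the statement of Lemma \ref{virtual_homological_separation} (regular cover, deck-orbit of $\tilde{W}$). Your $\tilde{Z}_{i,g}$ indexed over conjugates would play the same role, but you would still need the $\eta$-witness to finish.
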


The rest of this section is devoted to the proof of Lemma \ref{virtual_vacuum_directions}.

First, we recall the following interpretation of 
Fried's characterization of the homology direction cone \cite[{Expos\'e 14}]{FLP_book}.

\begin{lemma}\label{Fried_cone_characterization}
For any pseudo-Anosov automorphism $f\colon S\to S$ on an oriented closed surface of genus $\geq2$,
$$\mathrm{PD}\left(\mathcal{A}\left(e_f;\mathcal{B}_{\mathrm{Th}^*}(M_f)\right)\right)=\mathcal{C}_{\mathrm{Fr}}(f).$$
Here, $\mathcal{C}_{\mathrm{Fr}}(f)\subset H_1(M_f;\Real)$ denotes the Fried cone
of homology directions, namely,
the closure of all the rays emanating from the origin
and passing through $[\gamma]$,
where $\gamma$ ranges over all the periodic trajectories 
of the pseudo-Anosov suspension flow on $M_f$.
See also Notations \ref{pA_notation} and \ref{corner_A}.
\end{lemma}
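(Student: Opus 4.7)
The plan is to match both sides to the polar of the closed fibered cone. The argument has three steps: a polyhedral duality computation for the tangent cone, a Poincar\'e-duality translation, and an invocation of Fried's theorem.

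First, I would unpack the tangent cone $\mathcal{A}(e_f;\mathcal{B}_{\mathrm{Th}^*}(M_f))$ by polyhedral duality. The polytope $\mathcal{B}_{\mathrm{Th}^*}(M_f)\subset H^2(M_f;\Real)$ is the polar dual of $\mathcal{B}_{\mathrm{Th}}(M_f)\subset H_2(M_f;\Real)$ under the evaluation pairing. By Section~\ref{Subsec-dual_ball}, the vertex $e_f$ is the one distinguished by the closed fibered face $\bar{\Delta}\subset\partial\mathcal{B}_{\mathrm{Th}}(M_f)$ determined by $\phi_f$, meaning that $\langle e_f,\cdot\rangle$ attains its extreme value on $\bar\Delta$. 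The binding constraints at $e_f$ in the half-space description of $\mathcal{B}_{\mathrm{Th}^*}(M_f)$ are exactly those indexed by points of $\bar\Delta$, while every other constraint remains slack under small perturbations away from $e_f$. Consequently (up to a sign fixed by whether $e_f$ or $-e_f$ achieves value $+1$ on $\bar\Delta$, a harmless convention-dependent choice), $\mathcal{A}(e_f;\mathcal{B}_{\mathrm{Th}^*}(M_f))$ is the polar cone of the closed fibered cone $\mathbb{R}_{\geq 0}\cdot\bar\Delta$ in $H_2(M_f;\Real)$.

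Second, I would transport this description to $H_1(M_f;\Real)$ via Poincar\'e duality $\mathrm{PD}\colon H^2(M_f;\Real)\to H_1(M_f;\Real)$ together with its companion $\mathrm{PD}\colon H^1(M_f;\Real)\to H_2(M_f;\Real)$. Writing $v=\mathrm{PD}(a)$ for $a\in H^2(M_f;\Real)$ and $\Sigma=\mathrm{PD}(\phi)$ for $\phi\in H^1(M_f;\Real)$, the standard cup-product identity $\langle a,\mathrm{PD}(\phi)\rangle=\pm\langle\phi,\mathrm{PD}(a)\rangle$ converts the polar-cone description above into
$$\mathrm{PD}\bigl(\mathcal{A}(e_f;\mathcal{B}_{\mathrm{Th}^*}(M_f))\bigr) = \{v\in H_1(M_f;\Real) : \langle\phi,v\rangle\geq 0\ \text{for all } \phi\in C_{\phi_f}\},$$
where $C_{\phi_f}:=\mathrm{PD}^{-1}(\mathbb{R}_{\geq 0}\cdot\bar\Delta)\subset H^1(M_f;\Real)$ is the closed fibered cone and contains $\phi_f$ in its interior.

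The final step is to invoke Fried's theorem from \cite[Expos\'e~14]{FLP_book}, which characterizes the open fibered cone $\mathrm{int}\,C_{\phi_f}$ as exactly the set of $\phi\in H^1(M_f;\Real)$ pairing strictly positively with every nonzero element of $\mathcal{C}_{\mathrm{Fr}}(f)$; passing to closures and dualizing, this says that $\mathcal{C}_{\mathrm{Fr}}(f)$ is the closed polar cone of $C_{\phi_f}$ under the Kronecker pairing. Combining with the previous step yields the claimed equality. The only point requiring care is the internal consistency of sign and orientation conventions (for the central symmetry of $\mathcal{B}_{\mathrm{Th}^*}(M_f)$, for Poincar\'e duality, and for the labeling of $e_f$ versus $-e_f$ and of $\bar\Delta$ versus $-\bar\Delta$); no deeper geometric obstacle is present, since the content beyond linear algebra is precisely Fried's theorem.
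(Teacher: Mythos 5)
Your proposal is correct and follows essentially the same route as the paper's proof: identify the tangent cone $\mathcal{A}(e_f;\mathcal{B}_{\mathrm{Th}^*}(M_f))$ by polyhedral duality as the polar of the fibered cone, translate via Poincar\'e duality, and invoke Fried's theorem characterizing $\mathcal{C}_{\mathrm{Fr}}(f)$ as the same polar cone. The paper compresses this into two sentences but uses the same three ingredients.
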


\begin{proof}
By Fried's characterization,
$\Gamma\in H_1(M_f;\Real)$ lies $\mathcal{C}_{\mathrm{Fr}}(f)$ if and only if
the inequality
$\langle \mathrm{PD}(\Gamma),\Sigma\rangle\geq0$ holds for all $\Sigma\in H_2(M_f;\Real)$
which lies in the distinguished fibered cone $\mathcal{C}_{\mathrm{Th}}(f)$
(that is, the open cone over the distinguished fibered face of 
the Thurston norm unit ball $\mathcal{B}_{\mathrm{Th}}(M_f)$ dual to $e_f$);
see \cite[{Expos\'e 14}, Theorem 14.11]{FLP_book}.
In $H^2(M_f;\Real)$, the intersection of all the half-spaces
defined by the inequalities $\langle x-e_f,\Sigma\rangle \geq 0$,
ranging over all $\Sigma\in\mathcal{C}_{\mathrm{Th}}(f)$,
is exactly $e_f+\mathcal{A}\left(e_f;\mathcal{B}_{\mathrm{Th}^*}(M_f)\right)$,
implying the asserted characterization.
\end{proof}

\begin{lemma}\label{Fried_cone_more}
	Let $f\colon S\to S$ be a pseudo-Anosov automorphism
	on an orientable connected closed surface of genus $\geq2$.
	Suppose that $F\subset \mathcal{B}_{\mathrm{Th}^*}(M_f)$
	is a closed face with $e_f\in F$.
	\begin{enumerate}
	\item Suppose $e_f\in\partial F$.
	Then, $\mathrm{PD}(\mathcal{A}(e_f;F))$ contains the real homology class $[\gamma]$ 
	of some periodic trajectory $\gamma$.
	\item Suppose $F\subset \partial\mathcal{B}_{\mathrm{Th}^*}(M_f)$.
	If $H$ is any finitely generated subgroup of $\pi_1(M_f)$,
	and if, for all $g\in H$,	
	the real homology class $[g]$ lies in the real linear subspace of $H_1(M_f;\Real)$
	spanned by $\mathrm{PD}(\mathcal{A}(e_f;F))$,
	then $H$ is a quasiconvex subgroup of infinite index in $\pi_1(M_f)$.
	\end{enumerate}
See Notations \ref{pA_notation} and \ref{corner_A}.
\end{lemma}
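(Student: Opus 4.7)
I treat the two parts separately.

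For part (1), Lemma \ref{Fried_cone_characterization} identifies $\mathrm{PD}(\mathcal{A}(e_f; F))$ as a face of the polyhedral Fried cone $\mathcal{C}_{\mathrm{Fr}}(f)\subset H_1(M_f;\Real)$, of dimension $\dim F\geq 1$ because $e_f\in\partial F$ forces $F\neq\{e_f\}$. My plan is to show that any positive-dimensional face of $\mathcal{C}_{\mathrm{Fr}}(f)$ contains an extreme ray, and that every extreme ray is realized by a genuine periodic trajectory of the suspension flow. For the realization step I would invoke Fried's Markov-partition description of the pseudo-Anosov suspension flow: the Fried cone is generated as a rational polyhedral cone by the edge classes of the Markov transition graph, and each of its extreme rays arises from a simple directed cycle, which produces a closed orbit $\gamma$ whose homology class $[\gamma]$ lies on that ray. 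Hence $\mathrm{PD}(\mathcal{A}(e_f; F))$ must contain some such $[\gamma]$.

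For part (2), infinite index follows quickly. Since $F\subset\partial\mathcal{B}_{\mathrm{Th}^*}(M_f)$, the tangent cone $\mathcal{A}(e_f; F)$ is full-dimensional in the proper linear subspace $L_F\subset H^2(M_f;\Real)$ parallel to $F$, so $V_0:=\mathrm{span}(\mathrm{PD}(\mathcal{A}(e_f; F)))=\mathrm{PD}(L_F)$ is a proper subspace of $H_1(M_f;\Real)$. The hypothesis on $H$ pushes its image under the Hurewicz map into $V_0\cap H_1(M_f;\Integral)$, a subgroup of rank strictly less than $b_1(M_f)$, and so $H$ has infinite index in $\pi_1(M_f)$. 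For quasi-convexity I argue by contradiction. Since $M_f$ is closed hyperbolic by Thurston's theorem on pseudo-Anosov mapping tori, $\pi_1(M_f)$ is word-hyperbolic; by the tameness theorem of Agol and Calegari--Gabai together with Canary's covering theorem, any finitely generated infinite-index non-quasi-convex subgroup of $\pi_1(M_f)$ must be commensurable with the fiber subgroup $\pi_1(S_{\phi''})$ of some fibration class $\phi''\in H^1(M_f;\Integral)$ of $M_f$ over $S^1$. Assuming $H$ is of this form, the image of $\pi_1(S_{\phi''})$ in $H_1(M_f;\Real)$ is the codimension-one hyperplane $\ker(\phi'')$, which must sit inside $V_0$; this forces $V_0$ to have codimension exactly one, so $F$ is a top-dimensional face of $\mathcal{B}_{\mathrm{Th}^*}(M_f)$ and the Poincar\'e-dual face $F^\vee\subset\mathcal{B}_{\mathrm{Th}}(M_f)$ reduces to a single vertex of the fibered face $\Phi$ of $\mathcal{B}_{\mathrm{Th}}(M_f)$ dual to $e_f$. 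A direct calculation (using $L_F^\perp=L(F^\vee)$ and the biorthogonality $\dim F+\dim F^\vee=b_1(M_f)-1$) identifies $V_0^\perp\subset H^1(M_f;\Real)$, via Poincar\'e duality, with the line $\Real\cdot F^\vee\subset H_2(M_f;\Real)$. This line meets $\partial\mathcal{B}_{\mathrm{Th}}(M_f)$ only at the vertices $\pm F^\vee$ of the opposite fibered faces $\pm\Phi$, and therefore lies on the boundary of every fibered cone over every fibered face; the primitive integral class $\phi''$ on $V_0^\perp$ then cannot lie in any open fibered cone, contradicting the hypothesis that $\phi''$ represents a fibration. Hence $H$ must be quasi-convex.

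The main obstacle I foresee is the extreme-ray realization step in part (1): confirming that each extreme ray of $\mathcal{C}_{\mathrm{Fr}}(f)$ is genuinely attained by a closed orbit (rather than merely being an accumulation direction of such classes) requires careful bookkeeping with a Markov partition of the pseudo-Anosov suspension flow and some attention to degeneracies from singular orbits. Part (2), by contrast, is more structural once Canary's covering theorem and the Poincar\'e-dual description of $V_0^\perp$ are in hand.
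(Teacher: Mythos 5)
Your part (1) follows essentially the same outline as the paper: reduce to a one-dimensional face of the Fried cone (an extreme ray, which exists since the tangent cone at a vertex of a polytope is pointed) and realize it by a closed orbit via a Markov partition. The paper simply cites \cite[Lemma 5.13]{Liu_vhsr} for the realization step, so the ``obstacle'' you flag is the one the paper outsources to that reference; there is no real divergence here.

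Your part (2) takes a genuinely different route, but there is a gap in the way you invoke the dichotomy. You assert that a finitely generated, infinite-index, non-quasi-convex subgroup $H$ of $\pi_1(M_f)$ must be commensurable with the fiber subgroup of a fibration class $\phi''\in H^1(M_f;\Integral)$ \emph{of $M_f$ itself}. What tameness and Canary's covering theorem actually give is that $H$ is a \emph{virtual} fiber: the fiber of a fibration of some finite cover $M'\to M_f$, whose fibration class $\phi'\in H^1(M';\Integral)$ need not be (even rationally) pulled back from $M_f$. For instance, if $b_1(M_f)=1$ while $b_1(M')>1$, the cover acquires fresh fibration classes whose fibers are virtual fibers of $\pi_1(M_f)$ but are not commensurable with the unique fiber of $M_f$. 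Your subsequent steps --- identifying the Hurewicz image of $H$ with the hyperplane $\ker(\phi'')\subset H_1(M_f;\Real)$, forcing $V_0$ to be that hyperplane, and putting $\phi''$ on the line $\Real\cdot F^\vee$ --- all presuppose that $\phi''$ lives on $M_f$. One can patch this: under the hypothesis $[H]\subset V_0\subsetneq H_1(M_f;\Real)$, the image of $\ker(\phi')$ under the surjection $H_1(M';\Real)\to H_1(M_f;\Real)$ must have codimension exactly one, which forces $\phi'$ (up to scalar) to be the pullback of a rational class on $M_f$, and a rational class whose pullback to a finite cover is fibered is itself a fibration class of $M_f$ by Stallings. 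But none of this is in your write-up, and it is exactly the delicate point.

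For comparison, the paper's proof of part (2) sidesteps the covering subtlety entirely. Since $F$ has positive codimension in $\mathcal{B}_{\mathrm{Th}^*}(M_f)$, there is a nonzero primitive $\Sigma\in H_2(M_f;\Integral)$ annihilating $\mathcal{A}(e_f;F)$; this $\Sigma$ sits on the boundary of the distinguished fibered cone, so $\mathrm{PD}(\Sigma)$ is \emph{not} a fibration class. Setting $\psi(g)=\langle\mathrm{PD}([g]),\Sigma\rangle$, the hypothesis gives $H\subset\ker\psi$, from which infinite index is immediate, and if $H$ contained a normal surface subgroup $N$ of $\pi_1(M_f)$ then $\ker\psi/N$ would be finite inside the virtually cyclic $\pi_1(M_f)/N$, making $\ker\psi$ finitely generated and contradicting Stallings. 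The dichotomy then forces quasi-convexity. This packages exactly the data you need into a single integral functional $\psi$ on the ambient group and never has to analyze where a virtual fiber of a cover sits.
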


\begin{proof}
	The assumption of the first statement says 
	that $F$ has positive dimension.	
	All higher dimensional cases can be 
	reduced to the $1$--dimensional case, as $F$ is closed.
	Every $1$--dimensional face of $\mathcal{C}_{\mathrm{Fr}}(M_f)$
	must contain the real homology class of some periodic trajectory.
	In fact, this is part of a more detailed picture of Fried's characterization;
	for example, this follows immediately of \cite[Lemma 5.13]{Liu_vhsr}.
	Then we can translate the conclusion 
	into the first statement by Lemma \ref{Fried_cone_characterization}.
	
	The assumption of the second statement says 
	that $F$ has positive codimension in $\mathcal{B}_{\mathrm{Th}^*}(M_f)$.
	Since $\mathcal{B}_{\mathrm{Th}^*}(M_f)$ is the convex hull of integral lattice points,
	there exists some nonzero, primitive $\Sigma\in H_2(M_f;\Integral)$, 
	such that $\langle a,\Sigma\rangle=0$	holds for all $a\in \mathcal{A}(e_f;F)$.
	In particular, $\Sigma$ lies on the boundary of the distinguished fibered cone $\mathcal{C}_{\mathrm{Th}}(f)$.
	Therefore, $\mathrm{PD}(\Sigma)$ cannot a fibered class.
	
	Denote by $\psi\colon \pi_1(M_f)\to \Integral$ 
	the group homomorphism $g\mapsto \langle \mathrm{PD}([g]),\Sigma\rangle$.
	By assumption, $H$ lies in the kernel of $\psi$.
	Since $\psi$ has infinite cyclic image, $H$ must be of infinite index in $\pi_1(M_f)$.
	Moreover, $H$ cannot contain a normal surface subgroup $N$ of $\pi_1(M_f)$.
	For otherwise, $H/N$ would be a subgroup of 
	the virtually infinite cyclic group $\pi_1(M_f)/N$,
	implying that the kernel of $\psi$ is finitely generated.
	This cannot be the case when $\mathrm{PD}(\Sigma)$ is not a fibered class,
	(see \cite{Stallings-fibering}, or \cite[{Expos\'e 14, Theorem 14.2}]{FLP_book}).
	It follows that $H$ can only be a quasi-convex in the word-hyperbolic group $\pi_1(M_f)$,
	due to a fundamental dichotomy in the theory of closed hyperbolic $3$--manifold groups,
	(see \cite[Theorem 4.1.2 and Proposition 4.4.2]{AFW_book_group}).
\end{proof}

\begin{lemma}\label{facial_clusters}
	Let $f\colon S\to S$ be a pseudo-Anosov automorphism
	on an orientable connected closed surface of genus $\geq2$.
	Suppose that $F\subset \mathcal{B}_{\mathrm{Th}^*}(M_f)$
	is a closed face with $e_f\in F$.
	Then, there exists some finite collection $\mathcal{H}$	of 
	finitely generated subgroups of $\pi_1(M_f)$
	with the following properties.
	\begin{itemize}	
	\item
	If $\gamma$ is a periodic trajectory of the suspension flow,	
	such that $[\gamma]$ lies in $\mathrm{PD}(\mathcal{A}(e_f;F))$,
	then some finite cyclic cover of $\gamma$, as a free-homotopy loop,
	represents a conjugacy class in $\pi_1(M_f)$
	which has nonempty intersection with some $H\in\mathcal{H}$.
	\item
	For all $H\in\mathcal{H}$, and for all $g\in H$,
	the real homology class $[g]\in H_1(M_f;\Real)$ lies
	in the real linear subspace spanned by $\mathrm{PD}(\mathcal{A}(e_f;F))$.
	\end{itemize}
See Notations \ref{pA_notation} and \ref{corner_A}.
\end{lemma}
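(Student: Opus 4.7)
The plan is to encode periodic orbits of the pseudo-Anosov suspension flow on $M_f$ via a combinatorial flow-graph model, and then to extract the subgroups $\mathcal{H}$ from the connected components of the subgraph that carries the orbits whose homology lies in the specified facial cone.

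First, I fix a Markov partition for the pseudo-Anosov suspension flow on $M_f$ and form its associated directed transition graph $\Gamma$. Each directed edge $e$ of $\Gamma$ represents a transition chord in $M_f$; completing such a chord by a fixed auxiliary path based at a basepoint $\ast \in M_f$ assigns to $e$ a homology class $v_e \in H_1(M_f;\Real)$. Oriented cycles in $\Gamma$ correspond bijectively (up to cyclic permutation) to the periodic trajectories of the flow, and the homology of such a trajectory equals the sum of the $v_e$'s around the cycle. By Lemma \ref{Fried_cone_characterization}, the Fried cone $\mathcal{C}_{\mathrm{Fr}}(f)$ therefore coincides with the non-negative real cone generated by the vectors $v_e$.

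Second, let $L \subset H_1(M_f;\Real)$ denote the real linear span of $\mathrm{PD}(\mathcal{A}(e_f;F))$. Since $\mathrm{PD}(\mathcal{A}(e_f;F))$ is a face of the polyhedral cone $\mathcal{C}_{\mathrm{Fr}}(f)$, a non-negative combination of the $v_e$'s lands in this face if and only if it is supported on those edges $e$ with $v_e \in L$. Let $\Gamma_F \subset \Gamma$ be the subgraph spanned by such edges. Every periodic trajectory $\gamma$ with $[\gamma] \in \mathrm{PD}(\mathcal{A}(e_f;F))$ is then realized by an oriented cycle lying entirely inside $\Gamma_F$. The finite graph $\Gamma_F$ has finitely many connected components $C_1,\dots,C_m$; for each $C_i$, I fix a basepoint $x_i \in C_i$ together with a path from $\ast$ to $x_i$, and I let $H_i \leq \pi_1(M_f,\ast)$ be the subgroup generated by the corresponding based chord-loops at $x_i$. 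Each $H_i$ is finitely generated because $C_i$ has finitely many edges, and I set $\mathcal{H} = \{H_1,\dots,H_m\}$.

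Verification of the two conclusions will then be straightforward: given any $\gamma$ with $[\gamma] \in \mathrm{PD}(\mathcal{A}(e_f;F))$, its cycle lies in some $C_i$, and after iterating $\gamma$ a divisor of its combinatorial length (a finite cyclic cover of the free-homotopy loop) so that the cycle starts at $x_i$, the resulting iterate's conjugacy class contains an element of $H_i$; conversely, every element of $H_i$ is a concatenation of chord-loops inside $C_i \subset \Gamma_F$, so its real homology class is a sum of $v_e$'s with $v_e \in L$, and hence lies in $L = \mathrm{span}(\mathrm{PD}(\mathcal{A}(e_f;F)))$. The main obstacle I anticipate is producing a sufficiently clean Markov flow-graph model for the pseudo-Anosov suspension flow so that the edge-to-homology assignment is well defined and the Fried cone is precisely the positive span of the $v_e$'s. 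Standard constructions via Bestvina--Handel train tracks or pseudo-Anosov Markov partitions should work, but one must handle singular orbits carefully and control the ambiguity in the auxiliary chord choices (which at worst changes each $H_i$ by an inner automorphism, harmless for the statement).
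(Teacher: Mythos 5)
Your overall architecture matches the paper's: fix a Markov partition, form the transition graph, restrict to a subgraph carrying the orbits whose homology lies on the face, and take the subgroups generated by its connected components. The gap is in how you construct the subgraph. You claim that ``the Fried cone $\mathcal{C}_{\mathrm{Fr}}(f)$ therefore coincides with the non-negative real cone generated by the vectors $v_e$,'' citing Lemma \ref{Fried_cone_characterization}, but that lemma says nothing about edge-level classes. The Fried cone is the closed positive hull of homology classes of directed \emph{cycles} in the transition graph, while your chord-completion classes $v_e$ depend on the auxiliary-path choices and need not lie in $\mathcal{C}_{\mathrm{Fr}}(f)$ at all (with a spanning-tree choice, most are zero and the rest are signed fundamental-cycle classes, whose positive hull has no a priori relation to the directed-cycle cone). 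Consequently, the face-of-a-cone characterization you invoke does not apply to the $v_e$'s --- and even if it did, the correct condition for a non-negative combination of cone generators to land in a face is that the generators themselves lie \emph{in the face}, not merely in the linear span $L$ of the face. Your subgraph $\Gamma_F$, defined as the set of edges with $v_e\in L$, is therefore not shown to carry all cycles whose homology falls in $\mathrm{PD}(\mathcal{A}(e_f;F))$, so the first bullet of the lemma remains unproved. You also assert a bijection between transition-graph cycles and periodic orbits, which is too strong; the actual correspondence holds only up to finite cyclic covers, which is exactly why the first bullet is phrased in those terms.

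The paper sidesteps all of this by citing a ready-made \emph{support subgraph} $T_{f,\mathcal{R}}[E]$ associated to each radial face $E$ of the Fried cone, with the two needed properties supplied externally: the image of $H_1(T_{f,\mathcal{R}}[E];\Real)$ in $H_1(M_f;\Real)$ equals the linear span of $E$, and every periodic trajectory with homology in $E$ arises, up to a finite cyclic cover, from a dynamical cycle in $T_{f,\mathcal{R}}[E]$ (see \cite[Lemmas 5.7, 5.10, 6.4, and Remark 5.12]{Liu_vhsr} and \cite[Lemma 6.6]{Liu_profinite_almost_rigidity}). That support subgraph is, roughly, the union of the cycles whose homology lies on $E$, a cycle-level and choice-free construction, and establishing its two properties is precisely the technical content that your $v_e$-based shortcut does not discharge.
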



\begin{proof}
	We extract the asserted subgroups using 
	symbolic dynamics associated to a Markov partition of the pseudo-Anosov automorphism.
	This relies on the techniques developed in \cite[Section 5 and 6]{Liu_vhsr}.
	A more detailed exposition of a very similar construction
	appears in \cite[Lemma 6.6]{Liu_profinite_almost_rigidity}.
	
	In a nutshell, 
	upon choice of any Markov partition $\mathcal{R}$ with respect to $(S,f)$,
	we are able to obtain a finite directed graph $T_{f,\mathcal{R}}$
	(as a finite cell $1$--complex with oriented $1$--cells), called the \emph{transition graph}.
	This comes together with a continuous map $T_{f,\mathcal{R}}\to M_f$,
	to be described as follows.
	
	An $m$--\emph{dynamical cycle} of a directed graph refers to 
	a $m$--tuple of directed edges $(e_i)$ indexed by $i\in\Integral/m\Integral$,
	such that the terminal vertex of each $e_i$ agrees the initial vertex of $e_{i+1}$.
	
	The first property of $T_{f,\mathcal{R}}\to M_f$ 
	is that every $m$--dynamical cycle of $T_{f,\mathcal{R}}$
	projects an $m$--periodic trajectory in $M_f$, up to free homotopy,
	and moreover, every $m$--periodic trajectory in $M_f$ 
	admits a finite cyclic cover which arises this way.
	
	The second property of $T_{f,\mathcal{R}}\to M_f$ 
	is basically a refinement of the first property to each radial face $E$
	of the Fried cone $\mathcal{C}_{\mathrm{Fr}}(f)$ (see Lemma \ref{Fried_cone_characterization}).
	To each $E$,	
	one can uniquely associate a possibly disconnected, directed subgraph $T_{f,\mathcal{R}}[E]$ of $T_{f,\mathcal{R}}$,
	called the \emph{support subgraph} for $E$.
	The image of $H_1(T_{f,\mathcal{R}}[E];\Real)\to H_1(M_f;\Real)$
	is equal to the real linear subspace of $H_1(M_f;\Real)$ spanned by $E$,
	and moreover, every periodic trajectory in $M_f$ of homology class in $E$
	arises from a dynamical cycle in $T_{f,\mathcal{R}}[E]$, 
	up to passage to a finite cyclic cover.
	
	The above description of the transition graph and the support subgraphs
	follows from \cite[Lemmas 5.7, 5.10, 6.4, and Remark 5.12]{Liu_vhsr};
	the reader is referred to the proof of \cite[Lemma 6.6]{Liu_profinite_almost_rigidity} 
	for an elaboration.
	
	To finish the proof,
	we take $E$ to be the radial face $\mathrm{PD}(\mathcal{A}(e_f;F))$ of $\mathcal{C}_{\mathrm{Fr}}(f)$
	(Lemma \ref{Fried_cone_characterization}).
	For each connected component $V$ of $T_{f,\mathcal{R}}[E]$,
	we obtain a finitely generated subgroup $H_V$ of $\pi_1(M_f)$ as the image of $\pi_1(V)\to\pi_1(M_f)$,
	(fixing a conjugate by choosing auxiliary base points).
	The asserted finite collection $\mathcal{H}$ consists of all $H_V$,
	indexed by all $V\in\pi_0(T_{f,\mathcal{R}}[E])$.
\end{proof}

For any covering projection $Y'\to Y$ and any continuous map $X\to Y$ 
of a path-connected space, locally path-connected space $X$,
the restriction $X'\to Y'$ of the pullback map $X\times_Y Y'\to Y'$ 
to any path-connected component $X'$ of $X\times_Y Y'$
is called an \emph{elevate} of $X\to Y$ to $Y'$.

\begin{lemma}\label{virtual_homological_separation}
	Let $M$ be a closed hyperbolic $3$--manifold, and $\gamma$ 
	be a homotopically nontrivial loop $S^1\to M$.
	Suppose that $H$ is a infinite-index, quasiconvex subgroup of $\pi_1(M)$.
	Then,	there exist some connected regular finite cover $\tilde{M}$ of $M$
	(fixing a base point and a lift), 
	and 
	some deck-invariant finite collection $\tilde{\mathcal{Z}}$ of real linear subspaces of $H^2(\tilde{M};\Real)$, 
	such that the following properties hold.
	\begin{itemize}
	\item 
	For every conjugate $K$ of $H$ in $\pi_1(M)$, and for every $g\in K\cap\pi_1(\tilde{M})$,
	there is some $\tilde{Z}\in\tilde{\mathcal{Z}}$,
	such that	$[g]\in H_1(\tilde{M};\Real)$ lies in $\mathrm{PD}(\tilde{Z})$.	
	\item 
	For every $\tilde{Z}\in \tilde{\mathcal{Z}}$, 
	there is some elevate $\tilde{\gamma}$ of $\gamma$,
	such that $[\tilde{\gamma}]\in H_1(\tilde{M};\Real)$
	does not lie in $\mathrm{PD}(\tilde{Z})$.
	\end{itemize}
\end{lemma}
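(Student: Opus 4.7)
The plan combines the Agol--Wise virtual compact specialization of $\pi_1(M)$ with the Haglund--Wise virtual retraction theorem for quasiconvex subgroups, followed by Agol's RFRS / virtual first Betti number refinements of the cover.

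By \cite{Agol_VHC,Wise_book}, $\pi_1(M)$ is virtually compact special, so by Haglund--Wise the quasiconvex subgroup $H$ is a virtual retract: there exists a finite-index $\Gamma_0 \leq \pi_1(M)$ with $H \leq \Gamma_0$ and a group retraction $r_0 \colon \Gamma_0 \to H$; in particular $H$ is separable in $\pi_1(M)$. Let $\tilde{\Gamma} \triangleleft \pi_1(M)$ be a finite-index normal subgroup contained in $\Gamma_0$, giving a connected regular finite cover $\tilde{M} \to M$. Set
\[
W_0 \,:=\, \mathrm{image}\bigl(H \cap \tilde{\Gamma} \to H_1(\tilde{M};\Real)\bigr),\qquad \tilde{Z}_0 \,:=\, \mathrm{PD}^{-1}(W_0) \,\subset\, H^2(\tilde{M};\Real),
\]
and let $\tilde{\mathcal{Z}}$ be the (finite) orbit of $\tilde{Z}_0$ under the deck action $\rho$ of $\pi_1(M)/\tilde{\Gamma}$. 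Since $\tilde{\Gamma}$ is normal in $\pi_1(M)$, every conjugate $K = x H x^{-1}$ satisfies $K \cap \tilde{\Gamma} = x(H \cap \tilde{\Gamma}) x^{-1}$, whose image in $H_1(\tilde{M};\Real)$ is the deck translate $\rho(x)(W_0) = \mathrm{PD}(\rho(x) \tilde{Z}_0)$. This confirms the first bullet, and $\tilde{\mathcal{Z}}$ is deck-invariant by construction.

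For the second bullet, by deck-equivariance of both the collection $\tilde{\mathcal{Z}}$ and the set of elevates of $\gamma$, it suffices to arrange (A) that $W_0$ is a proper subspace of $H_1(\tilde{M};\Real)$, and (B) that the deck orbit of $[\tilde{\gamma}_0]$ for some elevate $\tilde{\gamma}_0$ of $\gamma$ is not contained in $W_0$. For (A), the retraction $r_0$ restricts to any finite-index subgroup of $\Gamma_0$ still containing $H$, giving a retraction splitting of the inclusion on $H_1$ with real coefficients; by separability of $H$ combined with Agol's virtual positive first Betti number theorem, I may refine $\Gamma_0$ so that $b_1(\Gamma_0) > b_1(H)$, which forces the image of $H$ to be a proper direct summand of $H_1(\Gamma_0;\Real)$. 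Since the surjection $H_1(\tilde{M};\Real) \twoheadrightarrow H_1(\Gamma_0;\Real)$ sends $W_0$ into this proper summand, $W_0$ itself is proper. For (B), I further refine $\tilde{\Gamma}$ using the RFRS property of $\pi_1(M)$ applied to a loop-representative of $\gamma$: a sufficiently deep RFRS cover detects $\gamma$'s homology class in a direction whose deck orbit escapes $W_0$, while separability of $H$ preserves $H \leq \Gamma_0$ throughout.

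The main obstacle is the simultaneous enforcement of the four constraints $\tilde{\Gamma} \triangleleft \pi_1(M)$ (regularity of the cover), $\tilde{\Gamma} \leq \Gamma_0$ (so the retraction applies), (A), and (B). This is handled by iteratively intersecting finite-index subgroups provided by separability of $H$, Agol's virtual $b_1$-growth, and RFRS applied to a representative of $\gamma$, and then passing to a normal core to guarantee that the final $\tilde{M} \to M$ is regular.
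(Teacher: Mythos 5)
There is a genuine gap in your proposal, concentrated in step (B). You retract $\Gamma_0$ onto $H$ alone, and then assert that ``a sufficiently deep RFRS cover detects $\gamma$'s homology class in a direction whose deck orbit escapes $W_0$.'' This is not an argument. The retraction onto $H$ gives you a direct-sum decomposition $H_1(\Gamma_0;\Real)\cong H_1(H;\Real)\oplus V$, but it gives you no information whatsoever about the component of the class of a lift of $\gamma$ in $V$. It may perfectly well be zero. RFRS guarantees that a nontrivial loop eventually has nonzero rational homology in some cover in the chain, but it gives no control over the position of that class relative to the image of $H$; you need $[\tilde\gamma]$ to avoid a specific, potentially large, subspace $\mathrm{PD}(\tilde Z)$, not merely to be nonzero. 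Neither separability of $H$ nor RFRS supplies this.

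The paper closes exactly this gap at the very first step, before any cover is chosen: since $H$ is quasiconvex of infinite index, one can pick a conjugate representative $x$ of $\gamma$ whose translation axis is disjoint from the limit set of $H$, and a ping-pong argument shows that $H$ and $x$ generate a quasiconvex subgroup isomorphic to $H*\langle x\rangle$. The Haglund--Wise virtual retraction is then applied to this \emph{larger} subgroup, not to $H$. Because the abelianization of $H*\langle x\rangle$ contains $\Real\cdot[x]$ as a direct summand independent of $H^{\mathrm{ab}}$, the retraction forces $\mathrm{Span}_\Real\{[g]:g\in H\}\oplus\Real\cdot[x]$ to be a direct summand of $H_1(\Pi';\Real)$, so $[x]\notin\mathrm{PD}(W')$ immediately --- no Betti-number growth argument and no RFRS are needed. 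Passing to the normal core and taking preimages under the umkehr/transfer map preserves both the containment of $H$-classes and the exclusion of $[\tilde\gamma]$, since the transfer map is surjective on real homology. To repair your proof you would need to build the representative of $\gamma$ into the retracted subgroup in this way; otherwise (B) remains unproved. (Your step (A) is also somewhat roundabout --- the retraction already makes the image of $H$ a direct summand, and one only needs this summand to be proper, which follows once $[x]$ supplies a complementary direction --- but this is a matter of economy rather than correctness.)
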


\begin{proof}
	We rewrite $\Pi=\pi_1(M)$ for convenience, 
	and treat $\gamma$ as a nontrivial conjugacy class of elements in $\Pi$.
	Since $H$ is quasiconvex of infinite index in $\Pi$, 
	we can find some representative $x\in\Pi$ of $\gamma$,
	such that $H$ and $x$ generate a free-amalgam, quasiconvex subgroup $H*\langle x\rangle$ of $\Pi$.
	For example, this can be arranged by picking a representative $x$ with axis 
	disjoint from the convex hull of $H$ in $\mathbb{H}^3$, 
	and showing the free amalgamation property 
	is an easy exercise of the ping pong argument.
	
	Because $\Pi$ is word-hyperbolic and virtually compact special \cite{Agol_VHC},
	any quasiconvex subgroup of $\Pi$ is a virtual retract
	\cite[Theorem 7.3]{Haglund--Wise}, (see also \cite[Proof 2 of Theorem 4.13]{Wise_book}).
	In particular, this means that we can find some finite-index subgroup $\Pi'$ of $\Pi$
	containing $H*\langle x\rangle$, and some group homomorphism $\Pi'\to H*\langle x\rangle$
	which is the identity restricted to $H*\langle x\rangle$. 
	
	Take $M'$ to be the connected finite cover of $M$ (with a lifted base point) 
	corresponding to some $\Pi'$ as above. 
	Obtain $W'$ as the real linear subspace of $H^2(M';\Real)$,
	such that $\mathrm{PD}(W')=\mathrm{Span}_\Real\{[g]\colon g\in H\}$ in $H_1(M';\Real)$.
	By construction,
	the conjugacy class of $x$ in $\Pi'$ represents an elevate $\gamma'$ of $\gamma$ to $M'$
	(which is a lift in this case). Since $\Pi'$ retracts onto $H*\langle x\rangle$,
	$H_1(M';\Real)$ contains $\mathrm{PD}(W')\oplus \Real\cdot[\gamma']$ as a direct summand,
	implying $[x]\not\in\mathrm{PD}(W')$.
	Moreover, $[g]\in \mathrm{PD}(W')$ holds for every $g\in H\cap\Pi'=H$.
	
	Take $\tilde{M}$ to be 
	the connected regular finite cover of $M$ corresponding to 
	the intersection $\tilde{\Pi}$ of all the conjugates of $\Pi'$ in $\Pi$. 
	Obtain $\tilde{W}$ as the preimage of $W'$
	under the umkehr homomorphism $H^2(\tilde{M};\Real)\to H^2(M';\Real)$.
	In other words, 
	$\mathrm{PD}(\tilde{W})$ is the preimage of $\mathrm{PD}(\tilde{W})$ under 
	the induced homomorphism $H_1(\tilde{M};\Real)\to H_1(M';\Real)$.
	
	Take $\tilde{\mathcal{Z}}$ 
	to be the orbit of $\tilde{W}$ under the action of
	the deck transformation group.
	
	We verify the asserted property for the pair $(\tilde{M},\tilde{\mathcal{Z}})$
	as constructed above.
	Note that $\tilde{M}$ is regular over $M$, and $\tilde{\mathcal{Z}}$
	is invariant under deck transformations.
	By symmetry, it suffices to check the asserted properties
	for $K=H$ and for $\tilde{Z}=\tilde{W}$.
	By construction, any $g\in H\cap \tilde{\Pi}$ represents a real homology class 
	$[g]\in\mathrm{PD}(\tilde{W})$.
	Moreover,	
	$\mathrm{PD}(\tilde{W})$ does not contain $[\tilde{\gamma}]$,
	where we can take $\tilde{\gamma}$ to be any elevate of $\gamma'$ to $\tilde{M}$.
	This is because $\mathrm{PD}(W')$ does not contain $[\gamma']$.
	Therefore, 
	the asserted properties hold for $K=H$ and for $\tilde{Z}=\tilde{W}$,	as desired.
\end{proof}

With the above preparation, 
we finish the proof of Lemma \ref{virtual_vacuum_directions} as follows.

Let $f\colon S\to S$ be a pseudo-Anosov automorphism
on an orientable connected closed surface of genus $\geq2$.
Suppose that $F\subset \mathcal{B}_{\mathrm{Th}^*}(M_f)$
is a closed face with $e_f\in F$.

Associated to $F$,
there exists a finite collection 
$\mathcal{H}$ of finitely generated subgroup of $\pi_1(M_f)$, 
as asserted in Lemma \ref{facial_clusters}.
By Lemma \ref{Fried_cone_more},
each $H\in\mathcal{H}$ is quasiconvex of infinite index in $\pi_1(M_f)$.
Also by Lemma \ref{Fried_cone_more}, 
$\mathrm{PD}(\mathcal{A}(e_f;F))$ must contain
the real homology class $[\eta]$ of some periodic trajectory $\gamma$.
Fix some $\mathcal{H}=\{H_1,\cdots,H_k\}$ and some $\eta$ as above.

For each $i=1,\cdots,k$, 
we obtain a connected regular finite cover $M'_i\to M_f$,
together with a finite collection $\mathcal{Z}'_i$ of real linear subspaces of $H^2(M'_i;\Real)$,
by applying Lemma \ref{virtual_homological_separation} 
(setting $(M,H,\gamma)=(M_f,H_i,\eta)$).

Take $\tilde{M}$ to be the connected regular finite cover of $M_f$
corresponding to the intersection of all $\pi_1(M'_i)$ in $\pi_1(M_f)$.
For each $i=1,\cdots,k$, and for each $Z'\in\mathcal{Z}'_i$, 
we obtain a real linear subspace $\tilde{Z}$ of $H^2(\tilde{M};\Real)$,
as the the preimage of $Z'\in\mathcal{Z}'_i$ under 
the umkehr homomorphism $H^2(\tilde{M};\Real)\to H^2(M'_i;\Real)$.

Take $\tilde{\mathcal{Z}}$ to be the finite collection of real linear subspaces of $H^2(\tilde{M};\Real)$
consisting of all $\tilde{Z}$.

We verify that the pair $(\tilde{M},\tilde{\mathcal{Z}})$ as constructed above
satisfies the asserted property in Lemma \ref{virtual_vacuum_directions},
as follows.

Every periodic trajectory $\tilde{\gamma}$ of the pullback pseudo-Anosov flow on $\tilde{M}$
projects a periodic trajectory $\gamma$ in $M_f$.
Suppose $\mathrm{PD}([\tilde{\gamma}])$ lies in $\mathcal{A}(\tilde{e},\tilde{F})$.
Then, $\mathrm{PD}([\gamma])$ lies in $\mathcal{A}(e_f,F)$,
by Lemma \ref{dual_ball_cover} and Notation \ref{corner_A}.
There is some $H_i$,
such that some finite cyclic cover of $\gamma$ 
represents a conjugacy class of $\tilde{M}_f$ intersecting $H_i$, 
as guaranteed by Lemma \ref{facial_clusters}.
It follows that $\tilde{\gamma}$ is an elevate of some periodic trajectory $\gamma'$ in $M'_i$,
such that $\gamma'$ represents a conjugacy class of $\pi_1(M'_i)$ which intersects $H_i\cap\pi_1(M'_i)$.
Therefore, $[\gamma']$ is contained in $\mathrm{PD}(Z')$ for some $Z'\in\mathcal{Z}'_i$,
as guaranteed by Lemma \ref{virtual_homological_separation}.
It follows that $[\tilde{\gamma}]$ maps into $\mathrm{PD}(Z')$ 
under the induced homomorphism $H_1(\tilde{M};\Real)\to H_1(M'_i;\Real)$.
By construction, $\mathrm{PD}[\tilde{\gamma}]$ lies in $\tilde{Z}\cap\mathcal{A}(\tilde{e},\tilde{F})$,
where $\tilde{Z}\in\tilde{\mathcal{Z}}$ is the umkehr-homomorphism preimage of $Z'$.
Therefore,
$(\tilde{M},\tilde{\mathcal{Z}})$
satisfies the first asserted property in Lemma \ref{virtual_vacuum_directions}.

Every $\tilde{Z}\in\tilde{\mathcal{Z}}$ maps onto some $Z'\in\mathcal{Z}'_i$ for some $\mathcal{Z}'_i$
under the umkehr homomorphism $H^2(\tilde{M};\Real)\to H^2(M'_i;\Real)$.
Meanwhile, $\mathcal{A}(\tilde{e},\tilde{F})$ maps onto
$\mathcal{A}(e'_i,F'_i)$, by Lemma \ref{dual_ball_cover} and Notation \ref{corner_A}.
The subset $Z'\cap \mathcal{A}(e'_i,F'_i)$ in $\mathcal{A}(e'_i,F'_i)$ has positive codimension,
since it does not contain $\mathrm{PD}([\eta'])\in\mathcal{A}(e'_i,F'_i)$
for some elevate $\eta'$ of $\eta$ to $M'_i$,
as guaranteed by Lemma \ref{virtual_homological_separation}.
It follows that the subset $\tilde{Z}\cap\mathcal{A}(\tilde{e},\tilde{F})$ in 
$\mathcal{A}(\tilde{e},\tilde{F})$ also has positive codimension.
Therefore,
$(\tilde{M},\tilde{\mathcal{Z}})$
satisfies the second asserted property in Lemma \ref{virtual_vacuum_directions}.

This completes the proof of Lemma \ref{virtual_vacuum_directions}.

\section{Proof of the main theorem}\label{Sec-main_proof}
In this section, we prove Theorem \ref{main_ecoc_fillable}.

Before the proof, we need one more lemma as follows,
which has the effect of promoting
any rational point of the real second cohomology
to an actual even lattice point.
The lemma says that this can be done 
by pulling back to finite cyclic covers 
dual to a primitive integral second homology class,
if you use, for example, a fibered class.

\begin{lemma}\label{iterate}
Let $M$ be an orientable connected closed $3$--manifold.
Suppose that $\Sigma\in H_2(M;\Integral)$ is a primitive integral homology class,
such that the first Betti numbers $b_1(M'_m)$
of the $m$--cyclic covers of $M$ dual to $\Sigma$ are uniformly bounded
for all $m\in\Natural$.

Then, there exists some $l\in\Natural$, depending on $M$ and $\Sigma$,
such that the following property hold.
\begin{itemize}
\item
For any $r\in\Natural$ and $a\in H^2(M;\Rational)$,
if $\langle a,\Sigma\rangle$ is an even integer, 
and if $ra$ is an even lattice point,
then for any $m\in\Natural$ divisible by $lr$,
the pullback $a'_m$ of $a$ to $M'_m$ is an even lattice point
in $H^2(M'_m;\Rational)$.
\end{itemize}
\end{lemma}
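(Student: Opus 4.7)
The plan is to reduce, via Poincar\'e duality on the closed $3$--manifolds, to a divisibility statement about the transfer (umkehr) map on $H_1$. Writing $a = 2b/r$ with $b \in H^2(M;\Integral)/\mathrm{tor}$, the two hypotheses on $a$ translate exactly to $\beta := \mathrm{PD}(b) \in H_1(M;\Integral)/\mathrm{tor}$ satisfying $\langle\phi,\beta\rangle \in r\Integral$, where $\phi := \mathrm{PD}(\Sigma) \in H^1(M;\Integral)$, and the conclusion becomes $\tau_m(\beta) \in r\cdot H_1(M'_m;\Integral)/\mathrm{tor}$, where $\tau_m\colon H_1(M;\Integral)\to H_1(M'_m;\Integral)$ denotes the transfer map.

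Let $\tilde M \to M$ be the infinite cyclic cover corresponding to $\phi$, with deck generator $t$, and set $N_\Integral := H_1(\tilde M;\Integral)$. The Wang exact sequence yields
\[
0 \to N_\Integral/(t^m-1)N_\Integral \to H_1(M'_m;\Integral) \to \Integral \to 0.
\]
Fix a class $\beta_0 \in H_1(M;\Integral)$ of $\phi$-value $1$, and decompose $\beta = n\beta_0 + \beta_\perp$ with $n := \langle\phi,\beta\rangle$ and $\beta_\perp \in \ker\phi$. Taking any integral lift $\tilde\beta_\perp \in N_\Integral$ of $\beta_\perp$ and analyzing the preimage of a representing loop in $M'_m$, one computes
\[
\tau_m(\beta) = n\,\tau_m(\beta_0) + S_m(\tilde\beta_\perp), \qquad S_m := 1 + t + \cdots + t^{m-1}.
\]
The horizontal summand $n\,\tau_m(\beta_0)$ lies in $r\cdot H_1(M'_m;\Integral)$ since $r\mid n$, so it remains to show that $S_m(\tilde\beta_\perp)$ is $r$-divisible in $N_\Integral/(t^m-1)N_\Integral$ modulo torsion.

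The bounded first Betti number hypothesis forces $N_\Rational := N_\Integral\otimes\Rational$ to be a finite-dimensional torsion $\Rational[t^{\pm 1}]$-module. Its $(t-1)$-primary decomposition $N_\Rational = N' \oplus N''$ makes $(t-1)$ nilpotent of some order $k$ on $N'$ and invertible on $N''$. From $S_m = (t^m-1)/(t-1)$, the action of $S_m$ vanishes on $N''$ modulo $(t^m-1)$, whereas on $N'$ the hockey-stick identity gives $S_m|_{N'} = \sum_{j=0}^{k-1}\binom{m}{j+1}(t-1)^j$. I would set $l := k!\cdot d$, where $d\in\Natural$ is a common denominator for the Bezout idempotent projecting $N_\Rational$ onto $N'$, so that $d$ times this projector preserves $N_\Integral$. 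For $m$ divisible by $lr$, each binomial coefficient $\binom{m}{j+1}$ with $j<k$ is divisible by $rd$; rewriting $S_m(\tilde\beta_\perp)/r$ as an integer-coefficient polynomial in $(t-1)$ applied to $d$ times the projection of $\tilde\beta_\perp$ produces an explicit element of $N_\Integral$ representing the quotient, yielding the needed $r$-divisibility in $N_\Integral/(t^m-1)N_\Integral$ modulo torsion. Combined with the Wang splitting and $r\mid n$, this gives $\tau_m(\beta) \in r\cdot H_1(M'_m;\Integral)/\mathrm{tor}$, as required.

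The main technical obstacle is the interplay between integral and rational structures on the Alexander module, particularly controlling the denominators of the Bezout projector onto the $(t-1)$-primary part and absorbing them uniformly into $l$ independently of $r$ and $a$. The binomial-divisibility fact---that $k!r\mid m$ implies $r\mid\binom{m}{j+1}$ for every $j<k$---then packages everything cleanly into the single constant $l=k!\cdot d$ depending only on $M$ and $\Sigma$.
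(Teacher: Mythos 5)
Your proposal is correct, but it takes a genuinely different route from the paper's. The paper's argument is substantially shorter and more conceptual: it chooses $l$ so that $b_1(M'_l)$ is \emph{maximal}, which forces the $\Integral/r$ deck group of $M'_{lr}\to M'_l$ to act trivially on $H_1(M'_{lr};\Rational)$; consequently, when a loop representing the class $\Gamma \in H_1(M'_l;\Integral)$ (obtained by writing $ra'_l = 2\,\mathrm{PD}(\Gamma)$ via a torsion $\mathrm{Spin}^{\mathtt c}$ structure) is lifted to its $r$ disjoint elevations in $M'_{lr}$, those lifts are all homologous modulo torsion, so the transfer of $\Gamma$ is $r$ times a single class $\Gamma'$ and $a'_{lr}=2\,\mathrm{PD}(\Gamma')$ immediately. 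No Alexander-module analysis appears; the $r$-divisibility is produced in one stroke by the trivial deck action. Your argument instead unwinds the transfer through the Wang sequence and the Alexander module $N_\Rational$, splits $N_\Rational$ into $(t-1)$-primary and complementary parts, disposes of the complement because $S_m=(t^m-1)/(t-1)$ kills it modulo $(t^m-1)$, and handles the primary part via the truncated expansion $S_m=\sum_{j<k}\binom{m}{j+1}(t-1)^j$ together with the divisibility $rd\mid\binom{m}{j+1}$ for $j<k$ when $k!\,dr\mid m$. The two approaches buy different things: the paper's is cleaner and needs no structure theory of the Alexander module, while yours gives an explicit bound $l=k!\,d$ in terms of the nilpotency order $k$ of $(t-1)$ on the $(t-1)$-primary part and the denominator $d$ of the Bezout idempotent, making the mechanism of the $r$-divisibility visible. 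One spot in your write-up that deserves a sentence more care: the sentence ``rewriting $S_m(\tilde\beta_\perp)/r$ as an integer-coefficient polynomial\ldots'' glosses over the integral/rational interplay. The clean way to see it is to observe that the image of $S_m(\tilde\beta_\perp)$ in $N_\Rational/(t^m-1)N_\Rational$ equals the image of the truncated sum $\sum_{j<k}\binom{m}{j+1}(t-1)^j(e\,\tilde\beta_\perp)=\sum_{j<k}\tfrac{\binom{m}{j+1}}{d}(t-1)^j(de)(\tilde\beta_\perp)$, which lies in $rN_\Integral$ precisely because $rd\mid\binom{m}{j+1}$; the $N''$-summand needs no integral control since it already dies in the rational quotient. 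With that clarification the argument is complete and correct.
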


\begin{proof}
By assumption, there exists some $l\in\Natural$,
such that $b_1(M'_l)$ achieves the maximum among all $b_1(M'_m)$.
We show that any such $l$ satisfies the asserted property.

To this end, suppose $r\in\Natural$, $s\in\Integral$ and $a\in H^2(M;\Rational)$,
such that $\langle a,\Sigma\rangle = 2s$, and $ra$ is an even lattice point.
Fix an orientation of $M$,
and fix a reference $\mathrm{Spin}^{\mathtt{c}}$ structure $\mathfrak{s}_0$ of $M$.
Note that any oriented closed $3$--manifold admits a torsion $\mathrm{Spin}^{\mathtt{c}}$ structure.
This fact is trivial when the first Betti number is zero;
when the first Betti number is nonzero, every vertex of the dual Thurston norm unit ball
is an even lattice point, and can be realized as the real first Chern class
of some $\mathrm{Spin}^{\mathtt{c}}$ structure 
which comes from a transversely oriented taut foliation;
then, a torsion $\mathrm{Spin}^{\mathtt{c}}$ structure 
can be obtained from that one by adding a suitable first homology class.
We assume $\mathfrak{s}_0$ to be torsion.

As $\Sigma\in H_2(M;\Integral)$ is primitive,
we may represent $\Sigma$ with some orientable connected closed subsurface of $M$.
This surface lifts to each $m$--cyclic cover $M'_m$ dual to $\Sigma$.
We denote by $\Sigma'_m\in H_2(M'_m;\Integral)$ the homology class of any lifted surface,
which does not depend on the choice of lifts.
Since $\mathfrak{s}_0$ is torsion, we observe
$$c_1(\mathfrak{s}'_m)=0$$
in $H^2(M'_m;\Rational)$ for all $m\in\Natural$,
where $\mathfrak{s}'_m$ denotes the pullback of $\mathfrak{s}_0$ to $M'_m$.

Since $ra$ is an even lattice point, the pullback class $ra'_l\in H^2(M'_l;\Rational)$
is also an even lattice point, so there exists some $\Gamma\in H_1(M'_l;\Integral)$,
such that 
$$ra'_l=c_1\left(\mathfrak{s}'_l+\Gamma\right)=2\cdot\mathrm{PD}(\Gamma)$$
holds in $H^2(M'_l;\Rational)$.
We obtain 
$$\langle \mathrm{PD}(\Gamma),\Sigma'_l\rangle=\langle (r/2)\cdot a'_l,\Sigma'_l\rangle=rs.$$
Representing $\Gamma$ by any loop $\gamma$ in $M'_l$,
we see that $\gamma$ has totally $r$ distinct lifts to $M'_{lr}$,
since $s$ is an integer.
Because $b_1(M'_{lr})\geq b_1(M'_l)$, 
the maximality of $b_1(M'_l)$ implies $b_1(M'_{lr})=b_1(M'_l)$.
In this case,
the $r$--cyclic deck tranformation group acts trivially on $H_1(M'_{lr};\Rational)$.
It follows that the $r$ different lifts of $\gamma$ are all homologous modulo torsion,
giving rise to one and the same integral lattice point $\Gamma'\in H_1(M'_{lr};\Rational)$.
So,
the pullback of $\mathrm{PD}(\Gamma)$ in $H^2(M'_{lr};\Rational)$ is equal to $r\cdot\mathrm{PD}(\Gamma')$.
On the other hand, 
the pullback of $\mathrm{PD}(\Gamma)=(r/2)\cdot a'_l$ is $(r/2)\cdot a'_{lr}$ in $H^2(M'_{lr})$.
So, we obtain $(r/2)\cdot a'_{lr}=r\cdot\mathrm{PD}(\Gamma')$,
or equivalently,
$$a'_{lr}=2\cdot\mathrm{PD}(\Gamma').$$
This shows that $a'_{lr}$ is an even lattice point in $H^2(M'_{lr};\Rational)$.
Consequently, for all $m\in\Natural$ divisible by $lr$, 
the further pullbacks $a'_m$ in $H^2(M'_m;\Rational)$ are all even lattice points.
\end{proof}

We are ready to prove Theorem \ref{main_ecoc_fillable}, as follows.

Let $M$ be any oriented closed hyperbolic $3$--manifold.
Our goal is to construct some connected finite cover $\tilde{M}$ of $M$,
together with some even lattice point $w\in H^2(\tilde{M};\Real)$
on the boundary $\partial\mathcal{B}_{\mathrm{Th}^*}(\tilde{M})$ of the dual Thurston norm unit ball,
such that $\tilde{w}$ is not the real Euler class of 
any weakly symplectically fillable contact structure on $\tilde{M}$.

We construct the asserted $(\tilde{M},\tilde{w})$ 
through a tower of three connected finite covers over $M$:
$$\tilde{M}\to M''\to M'\to M.$$ 

First, we obtain a connected finite cover $M'$ of $M$ which fibers over a circle.
Moreover, we require $b_1(M')\geq 2$. 
The existence of $M'$ follows from 
confirmed virtual properties of finite-volume hyperbolic $3$--manifolds \cite{Agol_VHC}.
Since $b_1(M')\geq2$,
any Thurston norm fibered cone contains infinitely many primitive classes, 
and their Thurston norm can be arbitrarily large.
Therefore,
we can find some oriented connected closed fiber surface $S'$ in $M'$ of genus $\geq3$.
The manifold $M'$ can be identified with the mapping torus of 
some pseudo-Anosov automorphism $S'\to S'$,
which is isotopic to the monodromy of the fibering.
Below, we speak of periodic trajectories in $M'$
with respect to the suspension pseudo-Anosov flow 
under the identification.

Fix some $S'\subset M'$ as above. Denote by $e'\in H^2(M';\Real)$
the real Euler class of the oriented plane distribution tangent to the surface fibers.
By construction, 
$e'$ must be a vertex on $\partial\mathcal{B}_{\mathrm{Th}^*}(M')$.
In particular, $e'$ is an even lattice point of $H^2(M';\Real)$.
Moreover, we note
$$\langle e',[S']\rangle = \chi(S')\leq -4.$$

Fix some closed face $F'\subset\partial\mathcal{B}_{\mathrm{Th}^*}(M')$
with $e'\in \partial F'$, which exists since $b_1(M')\geq2$.

Next, we obtain a connected finite cover $M''$ of $M'$ as asserted in Lemma \ref{virtual_homological_separation},
with respect to $e'$ and $F'$. Denote by $e''\in H^2(M'';\Real)$ the pullback of $e'$,
which is a vertex of $\mathcal{B}_{\mathrm{Th}^*}(M'')$.
Denote by $F''$ the minimal closed face of $\mathcal{B}_{\mathrm{Th}^*}(M'')$
that contains the pullback of $F'$.
Hence, $e''\in \partial F''$ and $F''\subset\partial \mathcal{B}_{\mathrm{Th}^*}(M'')$.
Denote by $S''\subset M''$ any preimage component of $S'$.

We claim that there is some rational point $a''\in H^2(M'';\Real)$
with $$\langle a'',[S'']\rangle =1,$$
such that $e''+2a''$ lies in $F''$, and 
such that no positive integral multiple of $\mathrm{PD}(a'')\in H_1(M'';\Real)$ is represented
by any periodic trajectory of the pullback pseudo-Anosov flow on $M''$.

In fact, as guaranteed by Lemma \ref{virtual_homological_separation},
there is a finite collection $\mathcal{Z}''$ of real linear subspaces of $H^2(M'';\Real)$,
such that the complement $U$ of their union in $\mathcal{A}(e'';F'')$ (Notation \ref{corner_A})
contains no $\mathrm{PD}([\gamma''])$ for any periodic trajectory $\gamma''$ in $M''$.
In particular, $U$ is a scaling-invariant, open, dense subset of $\mathcal{A}(e'';F'')$.
We observe
$$F''=\left(e''+\mathcal{A}(e'';F'')\right)\cap\mathcal{B}_{\mathrm{Th}^*}(M''),$$
so $(e''+U)\cap F''$ is open and dense in $F''$.
We also note that the conclusion of Lemma \ref{virtual_homological_separation}
forces $F''$ to have dimension $\geq2$ (see Lemma \ref{Fried_cone_more}).
On the other hand, the vertices of $F''$ are all even lattice points in $H^2(M'';\Real)$.
Since $e''$ is a vertex of $F''$ with $\langle e'',[S'']\rangle=\chi(S'')$,
and since any point $q''\in F''$ other than $e''$ satisfies $\langle q'',[S'']\rangle >\chi(S'')$,
the points $q''\in F''$ with $\langle q'',[S'']\rangle = \chi(S)+2$
form a compact polytope $K$ in $H^2(M'';\Real)$ with rational vertices.
Since $F''$ has dimension $\geq2$, $K$ must have dimension $\geq1$.
Since is open and dense in $K$, the rational points must be dense in $(e''+U)\cap K$.
Every rational point $q''$ in $(e''+U)\cap K$ gives rise to a rational point $a''=(q''-e'')/2$ as claimed.

Fix some $a''\in H^2(M'';\Real)$ as claimed above.

Finally, we obtain a connected $m$--cyclic cover $\tilde{M}$ of $M''$ dual to $[S'']$,
such that the pullback $\tilde{a}\in H^2(\tilde{M};\Real)$ of $a''$ is an even lattice point.
The condition can be satisfied as long as we pick some sufficiently divisible $m\in\Natural$
(depending on $M''$, $[S'']$ and $a''$), by Lemma \ref{iterate}.
Fix a lift $\tilde{S}$ of $S''$ to $\tilde{M}$.
Denote by $\tilde{e}\in H^2(\tilde{M};\Real)$ the pullback of $e''$.
We obtain 
$\langle \tilde{a},m[\tilde{S}]\rangle=m\cdot\langle a'',[S'']\rangle=m$,
implying
$$\langle \tilde{a},[\tilde{S}]\rangle =1.$$
Denote by $\tilde{F}$
the minimal closed face of $\mathcal{B}_{\mathrm{Th}^*}(\tilde{M})$ that contains the pullback of $F''$.
Hence, $\tilde{e}\in\partial\tilde{F}$, 
and $\tilde{F}\subset\partial \mathcal{B}_{\mathrm{Th}^*}(\tilde{M})$.

Set
$$\tilde{w}=\tilde{e}+2\tilde{a}.$$ 

To summarize, we have constructed a connected finite cover $\tilde{M}$ of $M$,
together with an even lattice point $\tilde{w}\in H^2(\tilde{M};\Real)$ 
which lies in a closed face $\tilde{F}\subset \partial\mathcal{B}_{\mathrm{Th}^*}(\tilde{M})$.

It remains to check that $\tilde{w}$ is not the real Euler class
of any weakly symplectically fillable contact structure on $\tilde{M}$.
To this end, we observe that $\mathrm{PD}(\tilde{a})$ 
is not represented by any $1$--periodic trajectory of the pullback pseudo-Anosov flow on $\tilde{M}$.
This is because $\mathrm{PD}(\tilde{a})$ projects $m\cdot\mathrm{PD}(a'')$
under the induced homomorphism $H_1(\tilde{M};\Real)\to H_1(M'';\Real)$,
but $m\cdot\mathrm{PD}(a'')$ is not represented by any periodic trajectory in $M''$.
Since $\langle \tilde{a},[\tilde{S}]\rangle=1$,
and since $\tilde{S}$ has genus $\geq3$ as $S'$ does,
we can apply Lemma \ref{no_fillable},
concluding the nonexistence of any weakly symplectically fillable contact structures 
of real Euler class $\tilde{w}=\tilde{e}+2\tilde{a}$.

This completes the proof of Theorem \ref{main_ecoc_fillable}.

\section{Further discussion}\label{Sec-discussion}

For any oriented closed $3$--manifold $M$, 
and 
for every even lattice point $w\in H^2(M;\Real)$ of dual Thurston norm $1$
(or more generally, $\leq1$),
Yazdi asks whether there exists some finite cover $M'$ of $M$,
such that the pullback $a'$ of $a$ to $M'$ is the real Euler class
of some transversely oriented taut foliation on $M'$ \cite[Question 9.4]{Yazdi_ecoc}.
An expected affirmative answer to this question 
is called the \emph{virtual Euler class one conjecture}.

We explain 
why our construction for Theorem \ref{main_ecoc_fillable}
does not negate Yazdi's conjecture.
Retaining the notations $(\tilde{M},\tilde{w})$ as in Section \ref{Sec-main_proof},
one might attempt to argue that
for any finite cover $M'''$ of $\tilde{M}$,
the pullback $w'''$ of $\tilde{w}$ to $M'''$
is not realized by taut foliations (or fillable contact structures) any more.
The argument works
when $M'''$ is cyclic over $\tilde{M}$ 
dual to the constructed surface fiber $\tilde{S}$.
However, if $\tilde{S}$ does not lift to $M'''$,
a preimage component $S'''$ of $\tilde{S}$ in $M'''$ 
will cover $\tilde{S}$ of some degree $k>1$.
One obtains $w'''=e'''+2a'''$ and $\langle a''',[S''']\rangle =k>1$.
Our construction implies that $\mathrm{PD}(a''')$ 
cannot be represented by any $k$--periodic tranjectory in $M'''$,
but $\mathrm{PD}(a''')$ may still be the sum of 
several homology classes that are representable 
by periodic trajectories (with sum of periods $k$).
In that case, 
certain similar tuples of periodic trajectories 
may present as chain complex generators of the periodic Floer homology.
In particular,
a similar criterion as Lemma \ref{no_trajectory_implies_vanishing}
would fail for $k$--periodic trajectories with $k>1$.

It might be true that 
our counter-examples in Theorem \ref{main_ecoc_fillable}
are not realized by any tight contact structure either.
See \cite[Remark 1.3]{Sivek--Yazdi} for discussion regarding
the Euler class one conjecture for tight contact structures.

\bibliographystyle{amsalpha}

\begin{thebibliography}{}

\bibitem[Ago13]{Agol_VHC} 
I.~Agol,
\textit{The virtual Haken conjecture}, with an appendix by I.~Agol, D.~Groves, and J.~F.~Manning,
Documenta Math.~\textbf{18} (2013), 1045--1087.


\bibitem[AscFW15]{AFW_book_group} 
 M.~Aschenbrenner, S.~Friedl, and H.~Wilton, 
\textit{3-Manifold Groups}, 
EMS Series of Lectures in Mathematics, 2015.

\bibitem[Bow12]{Bowden_exact_fillable}
J.~Bowden,
Exactly fillable contact structures without Stein fillings.(English summary)Algebr. Geom. Topol.12(2012), no.3, 1803–1810.


\bibitem[Cot09]{Cotton-Clay_sfh}
A.~Cotton-Clay,
\textit{Symplectic Floer homology of area-preserving surface diffeomorphisms}.
Geom.~Topol.~\textbf{13} (2009), 2619--2674.

\bibitem[EliG91]{Eliashberg--Gromov}
Y.~M.~Eliashberg and M.~Gromov, 
``Convex symplectic manifolds''. 
In: \textit{Several Complex Variables and Complex Geometry, Part 2 (Santa Cruz, CA, 1989)},
Proc.~Sympos.~Pure Math.~52, Amer.~Math.~Soc., Providence, RI (1991), pp.~135--162.

\bibitem[EliT98]{ET_book}
Y.~M.~Eliashberg and W.~P.~Thurston, 
\textit{Confoliations}, 
University Lecture Series 13, 
American Mathematical Society, Province, RI, 1998.

\bibitem[Etn04]{Etnyre_fillings}
J.~B.~Etnyre,
\textit{On symplectic fillings}.
Algebr.~Geom.~Topol.~\textbf{4} (2004), 73--80.

\bibitem[FatLP12]{FLP_book} 
A.~Fathi, F.~Laudenbach, V.~Po\'enaru, 
\textit{Thurston's Work on Surfaces}.
Translated from the 1979 French original by D.~M.~Kim and D.~Margalit. 
Princeton University Press, Princeton, NJ, 2012. 


\bibitem[Gab83]{Gabai_taut}
D.~Gabai,
\textit{Foliations and the topology of $3$--manifolds},
J.~Differential Geom.~\textbf{18} (1983), 445--503.

\bibitem[GabY20]{Gabai--Yazdi}
D.~Gabai and M.~Yazdi,
\textit{The fully marked surface theorem}. 
Acta Math. \textbf{225} (2020), 369--413.

\bibitem[Ghi05]{Ghiggini_strongly_fillable}
P.~Ghiggini, 
\textit{Strongly fillable contact 3-manifolds without Stein fillings},
Geom.~Topol.~\textbf{9} (2005) 1677--1687.

\bibitem[Ghi06]{Ghiggini_vanishing_os}
P.~Ghiggini,
\textit{Ozsv\'ath--Szab\'o invariants and fillability of contact structures}.
Math.~Z.~\textbf{253} (2006), 159--175.

\bibitem[HagW08]{Haglund--Wise} F.~Haglund and D.~T.~Wise, \textit{Special cube complexes}, Geom.~Funct.~Anal.~\textbf{17} (2008),
1551--1620. 

\bibitem[Hat02]{Hatcher_AT}
A.~Hatcher,
\textit{Algebraic Topology}.
Cambridge University Press, Cambridge, 2002.

\bibitem[HedN10]{Hedden--Ni}
M.~Hedden and Y.~Ni, 
\textit{Manifolds with small Heegaard Floer ranks}.
Geom.~Topol.~\textbf{14} (2010), 1479--1501.

\bibitem[HedT22]{Hedden--Tovstopyat-Nelip}
M.~Hedden and L.~Tovstopyat-Nelip,
``On naturality of the Ozsv\'ath-Szab\'a contact invariant''.
In: \textit{Gauge Theory and Low-Dimensional Topology--- Progress and Interaction}, pp.~123--143.
Open Book Ser., 5, Mathematical Sciences Publishers, Berkeley, CA, 2022.


\bibitem[KutLT20a]{KLT_hf_hm_i}
\c{C}.~Kutluhan, Y.-J.~Lee, and C.~H.~Taubes,
\textit{$\mathrm{HF}=\mathrm{HM}$, I: Heegaard Floer homology and Seiberg-Witten Floer homology}.
Geom.~Topol.~\textbf{24} (2020), 2829--2854.

\bibitem[KutLT20b]{KLT_hf_hm_ii}
\c{C}.~Kutluhan, Y.-J.~Lee, and C.~H.~Taubes,
\textit{$\mathrm{HF}=\mathrm{HM}$, II: Reeb orbits and holomorphic curves for the ech/Heegaard Floer correspondence}.
Geom.~Topol.~\textbf{24} (2020), 2855--3012.

\bibitem[KutLT20c]{KLT_hf_hm_iii}
\c{C}.~Kutluhan, Y.-J.~Lee, and C.~H.~Taubes,
\textit{$\mathrm{HF}=\mathrm{HM}$, III: holomorphic curves and the differential for the ech/Heegaard Floer correspondence}.
Geom.~Topol.~\textbf{24} (2020), 3013--3218.

\bibitem[KutLT20d]{KLT_hf_hm_iv}
\c{C}.~Kutluhan, Y.-J.~Lee, and C.~H.~Taubes,
\textit{$\mathrm{HF}=\mathrm{HM}$, IV: The Sieberg-Witten Floer homology and ech correspondence}.
Geom.~Topol.~\textbf{24} (2020), 3219--3469.

\bibitem[KutLT20e]{KLT_hf_hm_v}
\c{C}.~Kutluhan, Y.-J.~Lee, and C.~H.~Taubes,
\textit{$\mathrm{HF}=\mathrm{HM}$, V: Seiberg-Witten Floer homology and handle additions}.
Geom.~Topol.~\textbf{24} (2020), 3471--3748.

\bibitem[LeeT12]{LT_hp_swf}
Y.-J.~Lee, C.~H.~Taubes,
\textit{Periodic Floer homology and Seiberg-Witten-Floer cohomology}.
J.~Symplectic Geom.~\textbf{10} (2012), 81--164.


\bibitem[Liu20]{Liu_vhsr}
Y.~Liu,
\textit{Virtual homological spectral radii for automorphisms of surfaces}, 
J.~Amer.~Math.~Soc.~\textbf{33} (2020), 1167--1227.

\bibitem[Liu23a]{Liu_profinite_almost_rigidity}
Y.~Liu,
\textit{Finite-volume hyperbolic 3-manifolds are almost determined by their finite quotient groups},
Invent.~Math.~\textbf{231} (2023), 741--804.

\bibitem[Liu23b]{Liu_ent_vs_vol}
Y.~Liu,
\textit{Entropy versus volume via Heegaard diagrams}, preprint 2023, 52 pages:
\url{https://arxiv.org/abs/2312.14255v2}

\bibitem[OzbS04]{OzbS_book}
B.~Ozbagci and A.~I.~Stipsicz,
\textit{Surgery on Contact 3-Manifolds and Stein Surfaces}.
Bolyai Soc.~Math.~Stud., 13
Springer-Verlag, Berlin; J\'{a}nos Bolyai Mathematical Society, Budapest, 2004.

\bibitem[OzsS04a]{OS_hf_invariants}
P.~Ozsv\'ath and Z.~Szab\'o, 
\textit{Holomorphic disks and topological invariants for closed three-manifolds}, 
Ann.~Math.~(2) \textbf{159} (2004), 1027--1158.

\bibitem[OzsS04b]{OS_hf_properties}
P.~Ozsv\'ath and Z.~Szab\'o, 
\textit{Holomorphic disks and three-manifold invariants: properties and applications}, 
Ann.~Math.~(2) \textbf{159} (2004), 1159--1245.

\bibitem[OzsS04c]{OS_genus}
P.~Ozsv\'ath and Z.~Szab\'o, 
\textit{Holomorphic disks and genus bounds}.
Geom.~Topol.~\textbf{8} (2004), 311--334.

\bibitem[OzsS06]{OS_smooth_four}
P.~Ozsv\'ath and Z.~Szab\'o, 
\textit{Holomorphic triangles and invariants for smooth four-manifolds}.
Adv.~Math.~\textbf{202} (2006), 326--400.


\bibitem[SivY23]{Sivek--Yazdi}
S.~Sivek and M.~Yazdi,
\textit{Thurston norm and Euler classes of tight contact structures}.
Bull.~Lond.~Math.~Soc.~\textbf{55} (2023), 2976--2990.

\bibitem[Sta]{Stallings-fibering} 
J.~Stallings, ``On fibering certain 3-manifolds''. In
\textit{Topology of 3-manifolds and related topics} (Proc. The Univ.~of Georgia Institute, 1961), 
pp.~95--100. Prentice-Hall, Englewood Cliffs, N.J., 1962.


\bibitem[Thu86]{Thurston_paper_norm}
W.~P.~Thurston, 
\textit{A norm for the homology of 3-manifolds}, 
Mem.~Amer.~Math.~Soc.~\textbf{59} (1986), 99--130.

\bibitem[Wis12]{Wise_book} 
D.~T.~Wise, 
\textit{From Riches to RAAGs: 3-Manifolds, Right--Angled Artin Groups, and Cubical Geometry},
CBMS Regional Conference Series in Mathematics, 2012.

\bibitem[Yaz20]{Yazdi_ecoc}
M.~Yazdi,
\textit{On Thurston's Euler class-one conjecture}.
Acta Math.~\textbf{225} (2020), 313--368.
\end{thebibliography}

\end{document}